\documentclass[12pt]{amsart}
\pdfoutput=1
\usepackage{amsmath, amssymb, amsthm, amsbsy, bbm, amsfonts, color, verbatim, array, floatflt, yfonts}

\usepackage[margin=1.2in]{geometry}
\usepackage[all]{xy}
\usepackage[english]{babel}
\usepackage[shortlabels]{enumitem}
\usepackage{ stmaryrd } 

\usepackage{graphicx, color, overpic}
\usepackage[usenames,dvipsnames,svgnames,table]{xcolor}
\usepackage[pdfpagelabels]{hyperref}
\usepackage{cleveref}
\usepackage{mathtools}

\usepackage{multirow}
\usepackage{easybmat}

\setlength{\headheight}{15pt}
\setcounter{tocdepth}{2} 

\hyphenation{nonemptiness}

\theoremstyle{plain}
\newtheorem{thm}{Theorem}[section]
\newtheorem*{thm*}{Theorem}
\newtheorem{prop}[thm]{Proposition}
\newtheorem*{prop*}{Proposition}
\newtheorem{lemma}[thm]{Lemma}
\newtheorem*{lemma*}{Lemma}
\newtheorem{corollary}[thm]{Corollary}

\theoremstyle{definition}
\newtheorem{definition}[thm]{Definition}
\newtheorem{example}[thm]{Example}

\newtheorem{rmk}[thm]{Remark}
\newtheorem{algorithm}[thm]{Algorithm}

\newcommand{\R}{\mathbb{R}}
\newcommand{\Z}{\mathbb{Z}}

\newcommand{\E}{\mathbb{E}}
\newcommand{\bS}{\mathbb{S}}

\newcommand{\Id}{\operatorname{I}} 
\newcommand{\aG}{G} 
\newcommand{\aH}{H} 
\newcommand{\sH}{H_0} 
\newcommand{\On}{\operatorname{O}(n)} 
\newcommand{\Otwo}{\operatorname{O}(2)} 








\newcommand{\Isom}{\operatorname{Isom}} 
\newcommand{\Mov}{\textsc{Mov}} 
\newcommand{\Mod}{\textsc{Mod}} 
\newcommand{\ModH}{\textsc{Mod}_\aH} 
\newcommand{\Fix}{\textsc{Fix}} 
 
\newcommand{\Cent}{\operatorname{C}} 

\newcommand{\Range}{\operatorname{Im}}
\newcommand{\Ker}{\operatorname{Ker}}



\newcommand{\hconj}{[h]_\aH}

\newcommand{\conj}[1]{[#1]} 

\newcommand{\coconj}{\operatorname{C}} 
\newcommand{\coconjH}[2]{\operatorname{C}_\aH(#1,#2)} 
\newcommand{\scoconjH}[2]{\operatorname{C}_{\sH}(#1,#2)} 
\newcommand{\Comp}{\operatorname{Comp}} 
\newcommand{\CompH}{\operatorname{Comp}_\aH} 
\newcommand{\CompG}{\operatorname{Comp}_\aG} 
\newcommand{\BaseH}{\operatorname{Base}_\aH} 
\newcommand{\BaseG}{\operatorname{Base}_\aG} 
 
\numberwithin{equation}{subsection}

\definecolor{amethyst}{rgb}{0.6, 0.4, 0.8}
\definecolor{kellygreen}{rgb}{0.3, 0.73, 0.09}
\definecolor{americanrose}{rgb}{1.0, 0.01, 0.24}

\begin{document}

\hypersetup{pdfauthor={Milicevic, Schwer, Thomas},pdftitle={The geometry of conjugation in Euclidean isometry groups}}

\title[Conjugation in Euclidean Isometry Groups]{The Geometry of Conjugation in \\ Euclidean Isometry Groups}

\author{Elizabeth Mili\'{c}evi\'{c}}
\address{Elizabeth Mili\'{c}evi\'{c}, Department of Mathematics \& Statistics, Haverford College, 370 Lancaster Avenue, Haverford, PA, USA}
\email{emilicevic@haverford.edu}

\author{Petra Schwer}
\address{Petra Schwer, Heidelberg University, Department of Mathematics and Computer Science, Im Neuenheimer Feld 205, 69120 Heidelberg, Germany}
\email{schwer@uni-heidelberg.de}

\author{Anne Thomas}
\address{Anne Thomas, School of Mathematics \& Statistics, Carslaw Building F07,  University of Sydney NSW 2006, Australia}
\email{anne.thomas@sydney.edu.au}

\thanks{EM was supported by NSF Grant DMS 2202017. PS was supported by the DFG Project SCHW 1550/4-1. This research was also supported in part by ARC Grant DP180102437.}



\begin{abstract}
We describe the geometry of conjugation within any split subgroup $\aH$ of the full isometry group $\aG$ of $n$-dimensional Euclidean space.  We prove that for any $h \in \aH$, the conjugacy class $\hconj$ of $h$ is described geometrically by the move-set of its linearization, while the set of elements conjugating $h$ to a given $h'\in\hconj$ is described by the the fix-set of the linearization of $h'$. Examples include all affine Coxeter groups, certain crystallographic groups, and the group $\aG$ itself. 
 
\end{abstract}

\maketitle




\section{Introduction}\label{sec:Intro}

Group theory has a long history of studying conjugacy classes and the conjugation problem.
It turns out that conjugacy classes in the full isometry group $\aG$ of $n$-dimensional Euclidean space $\E^n$, as well as in all split subgroups $\aH$ of $\aG$, have a simple and beautiful geometric description.

\begin{thm}\label{thm:intro}
Let $H$ be any subgroup of the full isometry group of $\E^n$ which admits a semi-direct product splitting into an $O(n)$-part and a translation part. Then:
\begin{enumerate}
	\item\label{intro:conj} The conjugacy class of any $h = t^\lambda w \in H$ is determined by the move-set of its linearization $w$. As a set, the class $\conj{h}$ is obtained by first translating $h$ by all elements of $\textsc{Mod}_{H} (w) \subseteq \Mov(w)$, and then conjugating the so-obtained collection $t^{\textsc{Mod}_{H} (w)}h$ by all elements of $O(n)\cap H$. 
	\item\label{intro:coconj} The coconjugation set $\coconj(h,h') = \{ k \in H \mid khk^{-1} = h'\}$ for any $h, h' \in H$  has a closed-form description in terms of data involving the move-set $\Mov(w')$ of the linearization $w'$ of $h'$, and its shape is described by translates of the orthogonal space $\Fix(w')$. 
\end{enumerate}
\end{thm}

\noindent Precise statements of \Cref{thm:intro} are given in \Cref{thm:ConjClass} and \ref{thm:coconj}. 
An important special case, which provided our original motivation, is when $\aH$ is an affine Coxeter group; we give additional results in this setting in our companion paper~\cite{MST5}. 

Our point of view on Euclidean isometries is influenced by works of McCammond and his coauthors~\cite{BradyMcCammond, McCammondSulway, McCammond, LMPS}. We were surprised to discover that our approach to conjugation seems to be new, even for the full isometry group $\aG$.

\subsection{Structure of the paper} Section~\ref{sec:intro-mainresults} contains the formal statements of our main results, illustrated by many examples. Section~\ref{sec:Conj} concerns conjugacy classes and contains the proof of Theorem~\ref{thm:ConjClass}. 
We consider the coconjugation problem in Section~\ref{sec:CoConj}, where we prove \Cref{thm:coconj}. An algorithm for solving the coconjugation problem, and hence the conjugacy problem,  is contained in Section~\ref{sec:algorithm}.

\subsection{Acknowledgements}
We thank Martin Bridson, Benson Farb, Eamonn O'Brien, and Olga Varghese for helpful conversations. AT and PS are grateful for Haverford's support during their visits to EM in June 2022 and April 2023, respectively. 
PS was partially supported by the DFG grant no.~314838170, GRK 2297 \emph{MathCoRe}.

\section{Results and examples}\label{sec:intro-mainresults}

It is classical that the full isometry group $\aG$ of $\E^n$ splits as a semidirect product $\aG = T \rtimes \On$, where $T \cong \R^n$ is the translation subgroup of~$\aG$ and $\On$ is the group of orthogonal transformations. 
We consider subgroups $\aH$ of $\aG$ which respect this splitting; that is, where $\aH =  T_\aH \rtimes \sH$ for $T_\aH = T \cap \aH$ and $\sH = H \cap \On$. 
For any such $\aH \leq \aG$ and for all $h,h' \in \aH$, we write 
\begin{align*}
\hconj &= \{ khk^{-1} \mid k \in \aH \} \quad \text{ for the \emph{conjugacy class of }} h\in \aH \text{ and }\\
\coconjH{h}{h'} &= \{ k \in \aH \mid khk^{-1} = h' \} \quad \text{ for the \emph{coconjugation set  (from $h$ to $h'$)}}.
\end{align*}	
In particular, $\Cent_\aH(h,h)$ is the centralizer of $h$ in $\aH$, which we also denote by $\Cent_\aH(h)$.

For any $\lambda \in \R^n$, we write $t^\lambda$ for the translation of $\E^n$ by the vector $\lambda$. For any split $\aH \leq \aG$, we define $L_\aH = \{ \lambda \in \R^n \mid t^\lambda \in T_\aH \}$, and observe that $L_\aH$ is naturally a $\Z$-module. Then any~$h \in \aH$ can be expressed uniquely as $h = t^\lambda h_0$, where $\lambda \in L_\aH$ and $h_0 \in \sH$. We call $t^\lambda$ the \emph{translation part} and $h_0$ the \emph{spherical part} of~$h$.  For any $\lambda \in L_\aH$ and $h_0 \in \sH$, we have $h_0 t^\lambda h_0^{-1} = t^{h_0\lambda}$.

\subsection{Conjugation}

Following Brady--McCammond~\cite{BradyMcCammond} (but identifying $\E^n$ with $\R^n$), we recall that the \emph{move-set} and \emph{fix-set} of any isometry $g \in \aG$ are the affine subspaces of $\R^n$ given by, respectively,
\[
\Mov(g) = \{ y \in \R^n \mid gx = x + y \mbox{ for some $x \in \R^n$}\} = (g-\Id)\R^n = \Range(g - \Id)
\]
and
\[
 \Fix(g) = \{ x \in \R^n \mid g x = x \} = \Ker(g - \Id).
\] 
For example, if $r \in G$ is a reflection, then $\Mov(r)$ is the line through the origin orthogonal to the affine hyperplane $\Fix(r)$. 
If $g_0 \in \On$, then $\Mov(g_0)$ and $\Fix(g_0)$ are both linear subspaces, and~$\R^n$ has orthogonal decomposition $\R^n = \Mov(g_0) \oplus \Fix(g_0)$ (see~\cite[Remark 1.8]{LMPS}).  For any $\lambda \in \R^n$ and any $g_0 \in \On$, by~\cite[Proposition 1.21]{LMPS} we have \[\Mov(t^\lambda g_0) = \lambda + \Mov(g_0).\]

We now introduce an $\aH$-adapted version of the move-set, which we call the \emph{mod-set}.

\begin{definition}[Mod-set]
	\label{defn:mod-set} 
	Let $H = T_\aH \rtimes \sH$ be a split group of Euclidean isometries. For any $h \in \aH$, the \emph{mod-set (with respect to $\aH$) of $h$} is defined by:
\[\label{eq:mod} \ModH(h) = (h - \Id) L_\aH. \]
\end{definition}

\noindent Note that since $L_\aH = -L_\aH$, we could equally well have defined $\ModH(h) = (\Id - h)L_\aH$. We show in Lemma~\ref{lem:modShift} that for any $\lambda \in L_\aH$ and any $h_0 \in \sH$,
 \[\ModH(t^\lambda h_0) = \lambda + \ModH(h_0).\] It is immediate from definitions that $\ModH(h) \subseteq \Mov(h)$, and we prove in Lemma~\ref{lem:modMov} that $\ModH(h)$ is contained in the intersection $\Mov(h) \cap L_\aH$.  If $h_0 \in \sH$, then $\Mod(h_0)$ is a submodule of $L_\aH$, and hence a submodule of $\Mov(h_0) \cap L_\aH$. 

Our first main result says that the mod-set is the key to describing the conjugacy class $\hconj$.

\begin{thm}[Closed form of conjugacy classes]
	\label{thm:ConjClass}  
	Let $\aH = T_\aH \rtimes \sH$ be a split group of Euclidean isometries. Let $h = t^\lambda h_0 \in \aH$, where $\lambda \in L_\aH$ and $h_0 \in \sH$.  Then the conjugacy class of $h$ in $\aH$ satisfies
	\begin{equation}\label{eq:conj1}
		\hconj    
		= \bigcup_{u \in \sH} u \left( t^{\ModH(h_0) } h \right) u^{-1} 
		\end{equation}
		and also
	\begin{equation}\label{eq:conj2}
		\hconj    
		= \bigcup_{u \in \sH} t^{u( \lambda + \ModH(h_0) )} u h_0  u^{-1} = \bigcup_{u \in \sH} t^{u\ModH(h) } u h_0  u^{-1}.
		\end{equation}
\end{thm}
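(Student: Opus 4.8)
The plan is to compute $khk^{-1}$ directly for an arbitrary $k \in \aH$, exploiting the semidirect product structure. Writing $k = t^\mu u$ with $\mu \in L_\aH$ and $u \in \sH$, and setting $h_0' \define u h_0 u^{-1}$, I would use the relation $u t^\nu u^{-1} = t^{u\nu}$ together with $h = t^\lambda h_0$ to move all spherical parts to the right. Applying $h_0' t^{-\mu} = t^{-h_0'\mu} h_0'$, a short manipulation gives the closed form
\[
k h k^{-1} = t^{\,u\lambda + (\Id - u h_0 u^{-1})\mu}\,\bigl(u h_0 u^{-1}\bigr).
\]
This identity is the computational core of the argument: it exhibits the spherical part of any conjugate as $u h_0 u^{-1}$ and isolates how the translation part depends on the free parameter $\mu$.

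Next I would fix $u$ and let $\mu$ range over all of $L_\aH$, so that the translation exponent traces out $u\lambda + (\Id - u h_0 u^{-1})L_\aH$. The key structural input is that $\sH$ stabilizes the module $L_\aH$: since $T_\aH$ is normal in $\aH$, the relation $u t^\nu u^{-1} = t^{u\nu}$ forces $u\nu \in L_\aH$ whenever $\nu \in L_\aH$, whence $u L_\aH \subseteq L_\aH$, and applying $u^{-1} \in \sH$ gives $u L_\aH = L_\aH$. Combined with $L_\aH = -L_\aH$, this lets me rewrite
\[
(\Id - u h_0 u^{-1})L_\aH = u\,(h_0 - \Id)L_\aH = u\,\ModH(h_0).
\]
Hence for each fixed $u$ the set $\{\,khk^{-1} : \mu \in L_\aH\,\}$ equals $t^{\,u(\lambda + \ModH(h_0))}\,u h_0 u^{-1}$, and taking the union over $u \in \sH$ yields the first equality in~\eqref{eq:conj2}. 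Because every $k \in \aH$ has the form $t^\mu u$, this union is exactly $\hconj$, so no separate verification of the two inclusions is needed.

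The remaining two equalities are then formal. The final equality in~\eqref{eq:conj2} is immediate from Lemma~\ref{lem:modShift}, which gives $\ModH(h) = \lambda + \ModH(h_0)$, so that $u(\lambda + \ModH(h_0)) = u\,\ModH(h)$. For~\eqref{eq:conj1}, I would expand $u\bigl(t^{\ModH(h_0)} h\bigr)u^{-1}$: writing $t^{\ModH(h_0)}h = t^{\lambda + \ModH(h_0)}h_0$ and pushing $u$ across via $u t^\xi = t^{u\xi}u$ recovers the same indexed union, so~\eqref{eq:conj1} and~\eqref{eq:conj2} are merely two packagings of the same set.

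The computation is essentially routine; the one place demanding care is the bookkeeping in the conjugation formula for $khk^{-1}$, and the genuinely load-bearing fact is that $\sH$ stabilizes $L_\aH$, without which the rewriting of $(\Id - u h_0 u^{-1})L_\aH$ as $u\,\ModH(h_0)$ would fail. I would therefore isolate and state that stabilization explicitly before performing the union over $\sH$.
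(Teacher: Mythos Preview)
Your proposal is correct and follows essentially the same route as the paper: both compute $khk^{-1} = t^{u\lambda + (\Id - uh_0u^{-1})\mu}\,uh_0u^{-1}$ directly from the semidirect product structure, then use $\sH$-invariance of $L_\aH$ (packaged in the paper as Lemma~\ref{lem:modConj}) together with Lemma~\ref{lem:modShift} to rewrite the resulting union. The only cosmetic difference is order of presentation---you obtain~\eqref{eq:conj2} first and then unwind to~\eqref{eq:conj1}, whereas the paper does the reverse---and you spell out the $\sH$-stability of $L_\aH$ rather than citing the lemma.
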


\noindent In words, the two equalities of \Cref{thm:ConjClass} tell us that $\hconj$ is obtained by, respectively:
\begin{enumerate}
\item first translating $h$ by all elements of $\ModH(h_0)$, and then conjugating the so-obtained collection $t^{\ModH(h_0)}h$ 
by all elements of $\sH$; or
\item for each $u \in H_0$, translating the $u$-conjugate of the spherical part $h_0$ of $h$ by the set $t^{u( \lambda + \ModH(h_0) )}=t^{u\ModH(h)}$.
\end{enumerate} 
This second description in particular implies that the conjugacy class of every element $h=t^\lambda h_0\in\aH$ is determined by the conjugacy class of $h_0$ in $H_0$, together with the collection of images of mod-sets $u( \lambda + \ModH(h_0)) = u\ModH(h)$, as $u$ runs through the elements of $\sH$. 

As an easy consequence of Theorem~\ref{thm:ConjClass} and the fact that $\ModH(h) \subseteq \Mov(h) \cap L_\aH$, we obtain that (the translation parts of) conjugacy classes lie along move-sets:

\begin{corollary}[Conjugacy classes and move-sets]\label{cor:conjMov} Let $\aH = T_\aH \rtimes \sH$ be a split group of Euclidean isometries. Let $h = t^\lambda h_0 \in \aH$, where $\lambda \in L_\aH$ and $h_0 \in \sH$.   Then,
\begin{equation}\label{eq:conjMov1}
		\hconj    
		\subseteq \bigcup_{u \in \sH} u \left( t^{\Mov(h_0) \cap L_\aH} h \right) u^{-1}
		\end{equation}
and also
\begin{equation}\label{eq:conjMov2}
		\hconj  
		\subseteq \bigcup_{u \in \sH} t^{u(\Mov(h) \cap L_\aH)} u h_0  u^{-1}.
		\end{equation}
\end{corollary}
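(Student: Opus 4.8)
The plan is to derive both inclusions directly from the two equalities of \Cref{thm:ConjClass} by feeding in the containment $\ModH(h) \subseteq \Mov(h) \cap L_\aH$ supplied by Lemma~\ref{lem:modMov}. The single observation that makes this work is that every set-valued operation appearing in those formulas is monotone with respect to set inclusion: the assignment $S \mapsto t^S$ (read as $\{t^\mu : \mu \in S\}$) is monotone because $t^{(\cdot)}$ is injective, right multiplication by a fixed group element is a bijection, the linear action $S \mapsto uS$ of $u \in \sH \subseteq \On$ on $\R^n$ is injective, conjugation by $u$ is a bijection, and the union over $u \in \sH$ preserves inclusions termwise. Hence any enlargement of the mod-sets to larger subsets of $L_\aH$ propagates through the formulas unchanged in shape.

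For \eqref{eq:conjMov1} I would start from \eqref{eq:conj1}, which reads $\hconj = \bigcup_{u \in \sH} u\bigl(t^{\ModH(h_0)} h\bigr) u^{-1}$. Since $h_0 \in \sH$, the remark just before \Cref{thm:ConjClass} (equivalently, Lemma~\ref{lem:modMov} applied to $h_0$) gives $\ModH(h_0) \subseteq \Mov(h_0) \cap L_\aH$. Replacing the translation superscript $\ModH(h_0)$ by the larger set $\Mov(h_0) \cap L_\aH$ enlarges each $t^{\ModH(h_0)} h$ to $t^{\Mov(h_0) \cap L_\aH} h$; conjugating each by $u$ and taking the union over $u \in \sH$ then produces exactly the right-hand side of \eqref{eq:conjMov1}, yielding the stated containment.

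For \eqref{eq:conjMov2} I would instead use the second equality in \eqref{eq:conj2}, namely $\hconj = \bigcup_{u \in \sH} t^{u\ModH(h)} u h_0 u^{-1}$. Applying the injective linear map $u$ to the inclusion $\ModH(h) \subseteq \Mov(h) \cap L_\aH$ gives $u\ModH(h) \subseteq u(\Mov(h) \cap L_\aH)$ for each $u$. Monotonicity of $S \mapsto t^S$, of right multiplication by $u h_0 u^{-1}$, and of the union over $u \in \sH$ then upgrades each term $t^{u\ModH(h)} u h_0 u^{-1}$ to $t^{u(\Mov(h) \cap L_\aH)} u h_0 u^{-1}$, which is the right-hand side of \eqref{eq:conjMov2}.

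There is essentially no hard step: the entire mathematical content is the inclusion $\ModH(h) \subseteq \Mov(h) \cap L_\aH$, which is assumed from Lemma~\ref{lem:modMov}, together with the elementary fact that applying a fixed map to both sides of a set inclusion preserves it. The only point requiring a moment's bookkeeping is matching the two centering conventions to the correct starting equality: \eqref{eq:conjMov1} uses the mod-set of the spherical part $h_0$ (translated along with $h$), whereas \eqref{eq:conjMov2} uses the mod-set of the full element $h$, where $\ModH(h) = \lambda + \ModH(h_0)$ by Lemma~\ref{lem:modShift}. Keeping these straight is the whole of the care required.
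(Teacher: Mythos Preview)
Your argument is correct and matches the paper's own reasoning exactly: the corollary is stated there as ``an easy consequence of Theorem~\ref{thm:ConjClass} and the fact that $\ModH(h) \subseteq \Mov(h) \cap L_\aH$'' (the latter being Lemma~\ref{lem:modMov}), with no further proof given. You have simply spelled out the monotonicity bookkeeping that the paper leaves implicit.
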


\noindent Our second main definition is motivated by these containments.

\begin{definition}[Filling]\label{defn:filling} Let $\aH = T_\aH \rtimes \sH$ be a split group of Euclidean isometries. We say that $h \in \aH$ \emph{fills its move-set}, or that \emph{filling occurs for $h$}, if
\[
\ModH(h) = \Mov(h) \cap L_\aH.
\]
\end{definition}

\noindent We prove in Proposition~\ref{prop:conjFilling} that filling occurs (for both $h$ and $h_0$) if and only if the containments in Corollary~\ref{cor:conjMov} are equalities.

\begin{example}\label{eg:cmm} Let $\aH$ be the wallpaper group $\mathbf{cmm}$, denoted $2^*22$ in orbifold notation. Then $\aH$ is split, $\sH$ is the Klein four group generated by two commuting reflections, say $s_1$ and $s_2$, and $\aH$ is generated by $s_1$, $s_2$, and a $180^\circ$ rotation, say $\rho$, about a point not on any reflection axis. The group $\aH$ induces the tesselation of $\E^2$ by triangles depicted in Figures~\ref{fig:cmm_conj_reflection_cropped} and~\ref{fig:cmm_cuspidal}, and $L_H$ is the lattice of heavy dots in these figures. There is a natural bijection between the elements of $\aH$ and the tiles in these tesselations, and we identify each element of $H$ with its corresponding tile. A few tiles are labeled in Figure~\ref{fig:cmm_conj_reflection_cropped}.

  \begin{figure}[h]
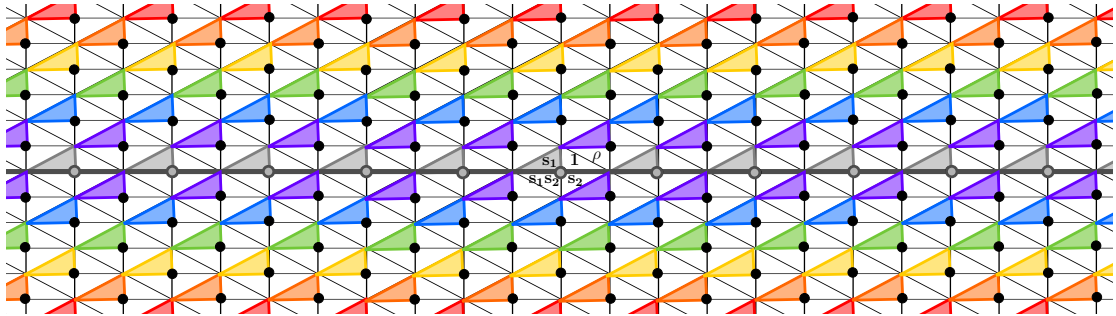

		\resizebox{0.95\textwidth}{!}
		{\begin{overpic}{cmm_conj_reflection_cropped}
		\put(51,13.5){$\mathbf{1}$}
		\put(48.5,13.5){$\mathbf{s_1}$}
		\put(50.8,11.8){$\mathbf{s_2}$}
		\put(53,14){$\mathbf{\rho}$}
		\put(47.3,11.8){$\mathbf{s_1 s_2}$}	
		\end{overpic}}
		\vspace{-3mm}
		\caption{\footnotesize{Conjugacy classes $[t^\lambda s_1]_\aH$ in the wallpaper group $\mathbf{cmm}$.}}	
		\label{fig:cmm_conj_reflection_cropped}	
\end{figure}

The conjugacy classes in $\aH$ are as follows.  In Figure~\ref{fig:cmm_conj_reflection_cropped}, each set of tiles of the same color is a conjugacy class $[t^\lambda s_1]_\aH$. The mod-set $\ModH(s_1) \subseteq L_\aH$ is the set of large gray dots along the horizontal axis, and the move-set $\Mov(s_1)$ is this horizontal axis. The horizontal lines in the figure are the sets $\Mov(t^\lambda s_1)$. If $\lambda \in \Mov(s_1)$ then $\Mov(t^\lambda s_1) = \Mov(s_1)$ is $\sH$-invariant, and the conjugacy class $[t^\lambda s_1]_\aH$ is the set of gray triangles along the horizontal axis. For $\lambda \not \in \Mov(s_1)$, the line $s_2 \Mov(t^\lambda s_1) = s_2 s_1 \Mov(t^\lambda s_1)$ is distinct from $\Mov(t^\lambda s_1)$, and so $[t^\lambda s_1]_\aH$ is a pair of horizontal ``lines" of triangles (of the same color). The description of the conjugacy classes $[t^\lambda s_2]_\aH$ is similar, just involving vertical ``lines" of triangles instead. Note that $s_1$ and $s_2$ both fill their move-sets.

Figure~\ref{fig:cmm_cuspidal} depicts the conjugacy classes $[t^\lambda s_1 s_2]_\aH$. Since $s_1 s_2 = -\Id$, the mod-set $\ModH(s_1s_2)$ equals $2L_\aH$, shown by large gray dots, and $\Mov(s_1 s_2) = \R^2$. Thus $s_1 s_2$ does not fill its move-set. The left of Figure~\ref{fig:cmm_cuspidal} shows the conjugacy class $[t^\lambda s_1 s_2]_\aH$ for any $\lambda \in \ModH(s_1s_2)$, while the center and right show the cases $\lambda \in L_\aH\setminus \ModH(s_1s_2)$. In the center, the set $\ModH(t^\lambda s_1 s_2) = \lambda + \ModH(s_1s_2)$ is $\sH$-invariant, while on the right, the $\sH$-orbit of $\ModH(t^\lambda s_1s_2)$ has $2$ elements, corresponding to the dark and light pink triangles. 

Finally, for any $\lambda \in L_\aH$, the class $[t^\lambda]_\aH$ is just the finite set $\{ t^{h_0\lambda} \mid h_0 \in \sH \}$.
\end{example}

\begin{figure}[htb]
	\begin{minipage}{0.3\textwidth}
		\centering
		\includegraphics[width=0.9\textwidth]{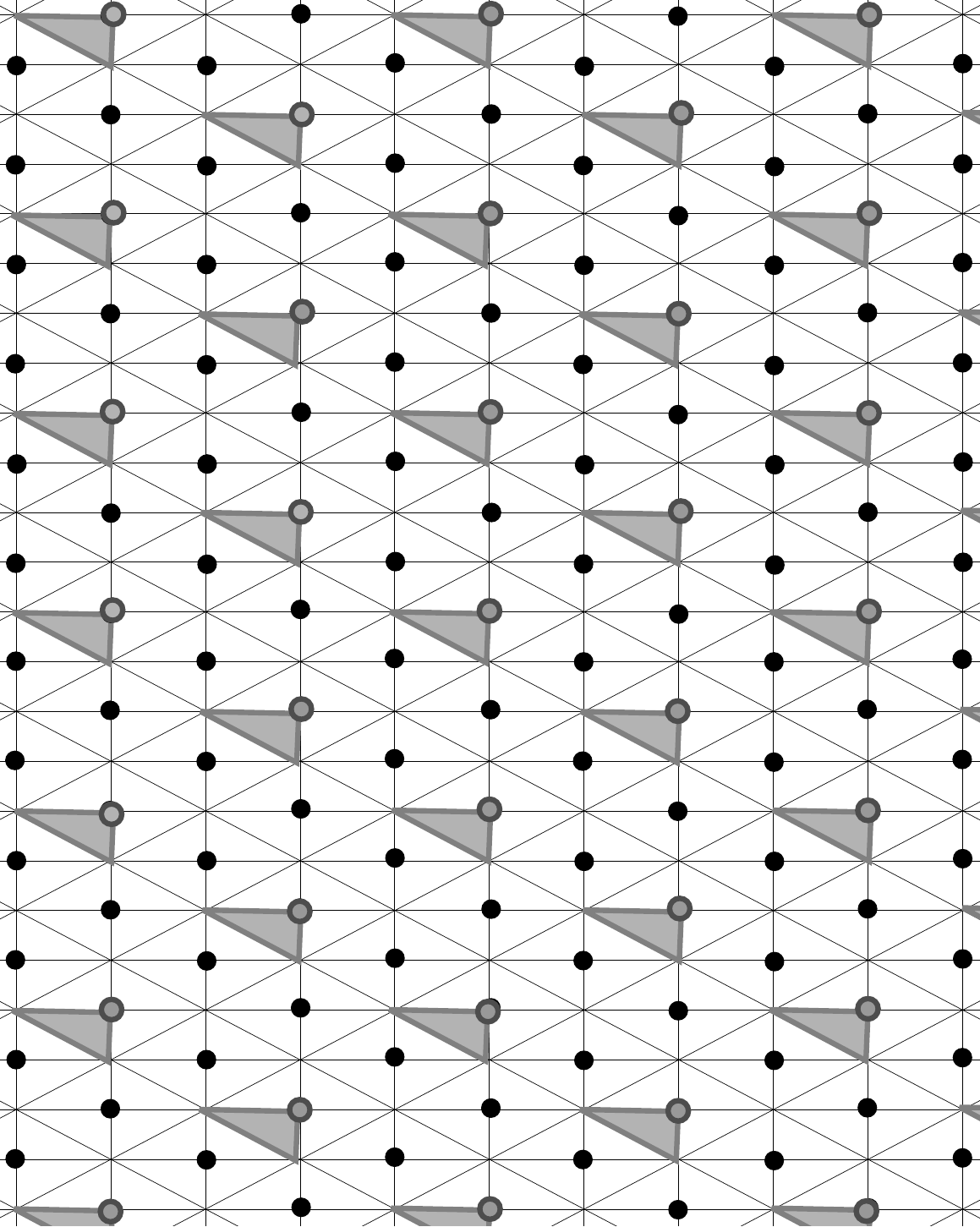}
	\end{minipage}
	\begin{minipage}{0.3\textwidth}
		\centering
		\includegraphics[width=0.9\textwidth]{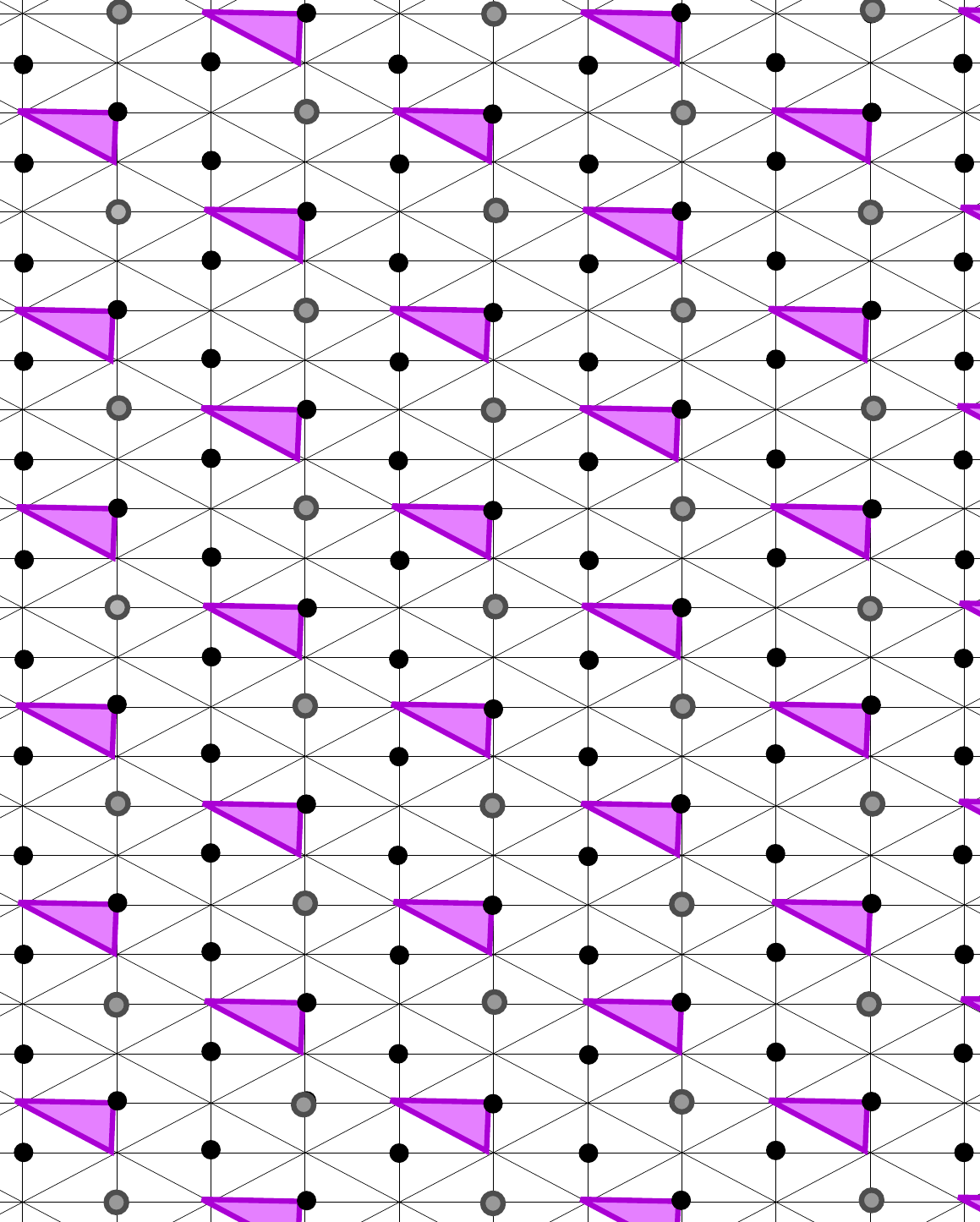}
	\end{minipage}
	\begin{minipage}{0.3\textwidth}
		\centering
		\includegraphics[width=0.9\textwidth]{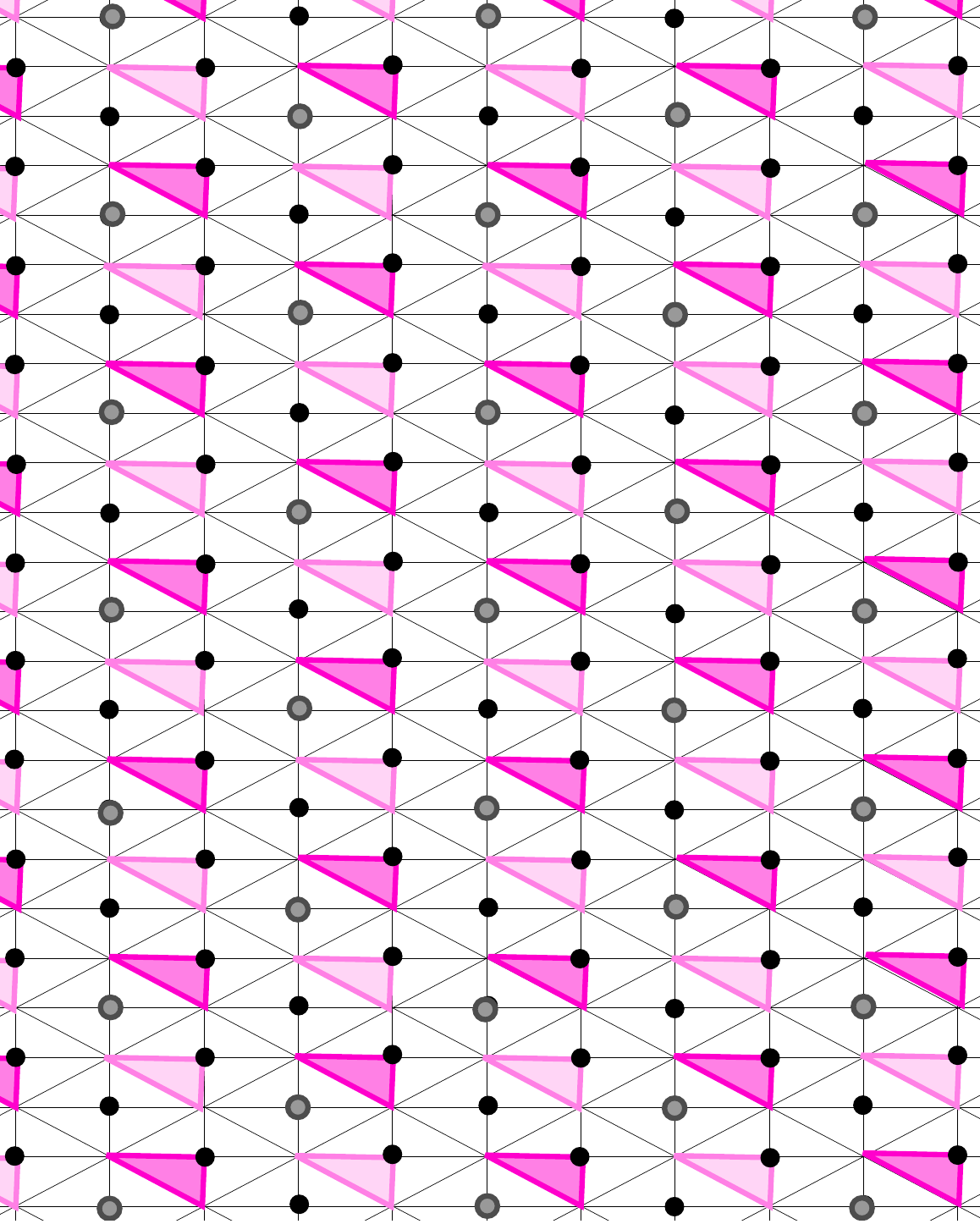}
	\end{minipage}
	\caption{\footnotesize{Conjugacy classes $[t^\lambda s_1 s_2]_\aH$ in the wallpaper group $\mathbf{cmm}$.}}
	\label{fig:cmm_cuspidal}
\end{figure}

\begin{rmk}\label{rmk:spherical}
We point out the importance of the subgroup $\sH$ in our descriptions of conjugacy classes. For example, the spherical part of $h \in \aH$ completely determines the mod-set appearing in the first equality of Theorem~\ref{thm:ConjClass}, and in both equalities in its statement we are conjugating just by elements of $\sH$.  This importance of $\sH$ is perhaps unsurprising if we consider the boundary at infinity $\partial\E^n\cong\bS^{n-1}$, that is, the set of all parallelism classes of rays in~$\E^n$.  The group~$\aH$ acts on $\partial \E^n$ with kernel its translation subgroup $T_\aH$, and so all we are seeing at infinity is the induced action of~$\sH$. 
\end{rmk}

\begin{rmk}\label{rmk:cryst}  An important class of subgroups of $G$ are the \emph{crystallographic} groups. These can be defined as the discrete subgroups of $\aG$ which act cocompactly on $\E^n$; a  crystallographic group is sometimes called \emph{symmorphic} if it splits. Thus a split subgroup $\aH = T_\aH \rtimes \sH$ of $\aG$ is crystallographic exactly when $L_\aH$ is a lattice in $\R^n$ and $\sH$ is finite.  The 17 wallpaper groups, 13 of which split, are the $2$-dimensional crystallographic groups.  Any crystallographic group $\aH \leq \aG$ is of finite index in a split crystallographic $\aH' \leq \aG$ (see, for instance,~\cite[p. 316]{Ratcliffe}), and then obviously every $\aH$-conjugacy class is contained in some $\aH'$-conjugacy class.  Hence Theorem~\ref{thm:ConjClass} provides, up to finite index, a description of all conjugacy classes in all crystallographic groups.
\end{rmk}

\begin{rmk}\label{rmk:affineCoxeter}
 The \emph{affine Coxeter groups}, all of which split, are the crystallographic groups which are generated by reflections in the faces of a convex polytope (compare~\cite[Definition 6.4.4]{Davis}). As explained in~\cite[Appendix B]{MST5}, for $n = 2,3$ every $n$-dimensional crystallographic group is finite index in some affine Coxeter group, but for $n \geq 4$ there are examples of crystallographic groups $\aH \leq \aG = \Isom(\E^n)$ which are not contained in any affine Coxeter subgroup of $\aG$.
\end{rmk}

\begin{example}\label{eg:introG} 
Let $\aH = \aG = \Isom(\E^2)$ be the full isometry group of the Euclidean plane.  Let $\lambda \in \R^2$ be nonzero, let $r \in \Otwo$ be the unique linear reflection which fixes~$\lambda$, and let~$g$ be the glide-reflection $g= t^\lambda r$.  Then the conjugacy class $[g]_\aG$ is the disjoint union of all lines which are tangent to the circle of radius $\| \lambda \|$, as depicted in Figure~\ref{fig:introG}.  More precisely, if~$\ell$ is a line tangent to this circle, then the point $p$ of~$\ell$ corresponds to the element $t^p r_\ell$ of~$[g]_\aG$, where $r_\ell \in \Otwo$ is the unique linear reflection preserving $\ell$. 

\begin{figure}[htb]
	\begin{center}
		\includegraphics[width=0.35\textwidth]{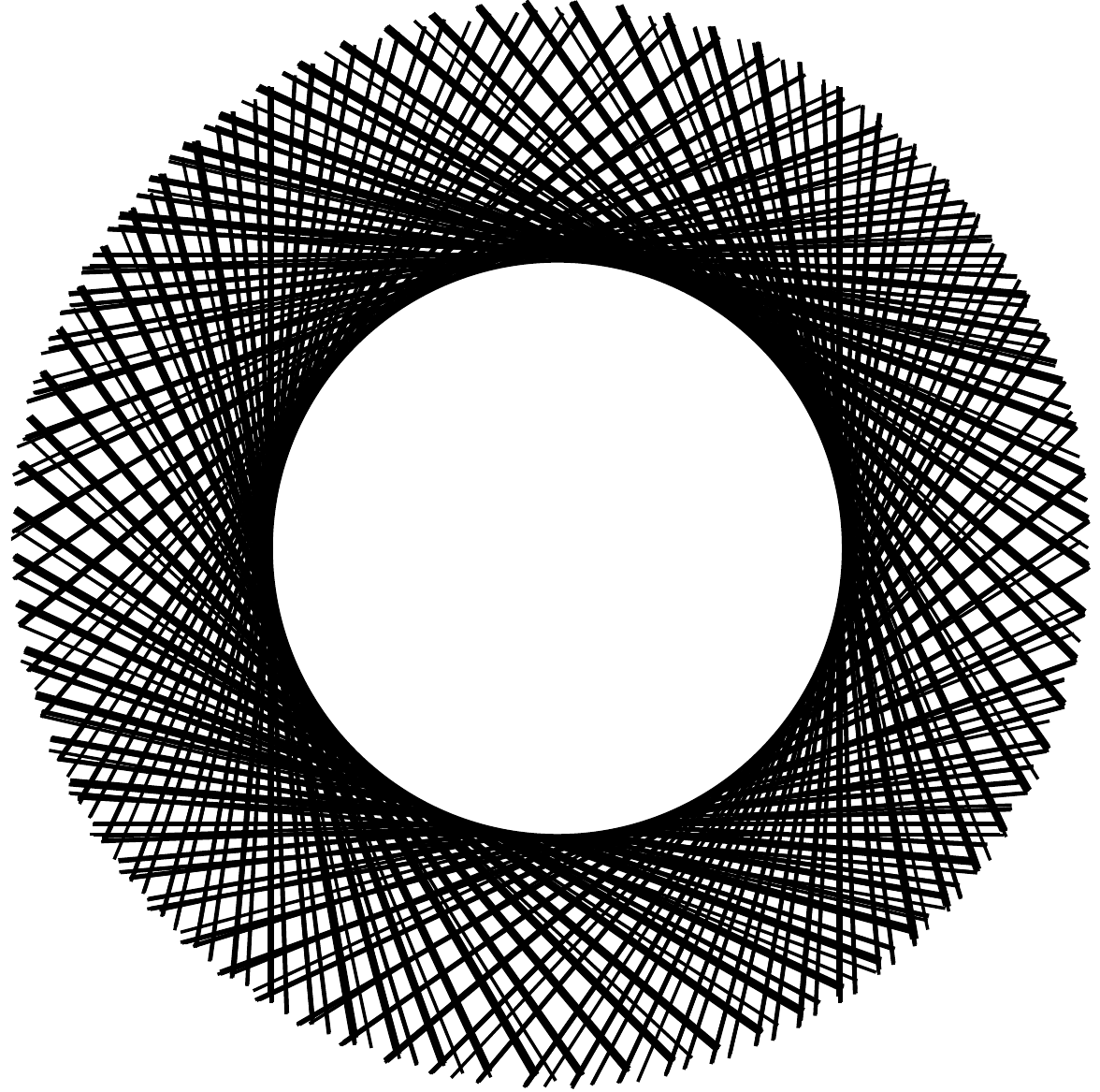}
	\end{center}
	\caption{\footnotesize{The conjugacy class of a glide-reflection in $\Isom(\E^2)$.}}
	\label{fig:introG}
\end{figure} 

Note that each same-color pair of ``lines" of tiles in Figure~\ref{fig:cmm_conj_reflection_cropped} can be viewed as a ``discrete shadow" of a pair of actual lines in Figure~\ref{fig:introG}.

Now take any $\lambda \in \R^2$ and let $g_0 = -\Id$. Then $g = t^\lambda g_0$ is the rotation by $180^\circ$ about the point~$\frac{1}{2}\lambda$, and all such rotations are conjugate in $\aG$. Hence the three distinct conjugacy classes of $\mathbf{cmm}$ seen in Figure~\ref{fig:cmm_cuspidal} are all discretizations of a single conjugacy class in $\aG$.

For translations, given any $\lambda \in \R^2$ we can identify the conjugacy class $[t^\lambda]_\aG = \{ t^{u\lambda} : u \in \Otwo \}$ with the circle of radius $\| \lambda \|$. Thus the finitely many conjugates of any translation in  $\mathbf{cmm}$ are again just a discrete glimpse of its full conjugacy class in $\aG$.

We can generalize the previous paragraph as follows: the conjugacy classes of translations in $\aG = \Isom(\E^n)$ partition $\R^n$ into the set of spheres centered at the origin, and for any split crystallographic subgroup $\aH \leq \aG$, the conjugacy classes of translations in $\aH$ pick out finitely many points on each of a discrete set of these spheres.
\end{example}

\begin{rmk}
The full isometry group $\aG$ of $\E^n$ is a Lie group, and the conjugation action of $\aG$ on itself is smooth.  It follows that conjugacy classes in $\aG$, which are the orbits of this action, are immersed submanifolds of $\aG$.  We note that the conjugation action of $\aG$ on itself is not proper, since the entire non-compact group $\aG$ fixes its identity element.  There is considerable literature on conjugacy classes in compact Lie groups, such as $\On$, where the conjugation action is proper and there is a classical relationship to representation theory (see, for example, \cite[Chapter IV]{BroeckerDieck}). However, we do not know of any work on conjugacy classes in $\aG$ itself from a Lie-theoretic point of view. Our proofs use only the algebraic structure of $\aG$, as a semidirect product.
\end{rmk}

We now continue the description of our main results.  
The equalities in \Cref{thm:ConjClass} suggest that a conjugacy class naturally decomposes into a family of subsets. We explore this phenomenon by considering the \emph{components} of the conjugacy class $\hconj$; that is, the subsets of $\hconj$ of the form $u(t^{\ModH(h_0)} h)u^{-1}$ where  $u\in \sH$. 
Write $\CompH(h)$ for the set of components of $\hconj$. By definition, the group $\sH$ acts transitively by conjugation on $\CompH(h)$.

\begin{thm}[Components] 
	\label{thm:Components}  
	Let $\aH = T_\aH \rtimes \sH$ be a split group of Euclidean isometries. Let $h = t^\lambda h_0 \in \aH$, where $\lambda \in L_\aH$ and $h_0 \in \sH$. Then:
	\begin{enumerate}
		\item The conjugation action of the group $T_\aH$ induces a transitive action by translation on the elements of each component of $\hconj$.
		\item Linearization induces a natural surjection from $\CompH(h)$ to $\CompH(h_0)$. 
		\item There is a natural bijection between  $\CompH(h_0)$ and  $[h_0]_{\sH}$.   
	\end{enumerate}
\end{thm}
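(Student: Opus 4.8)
The plan is to work directly from the decomposition $\hconj = \bigcup_{u\in\sH} u\bigl(t^{\ModH(h_0)}h\bigr)u^{-1}$ furnished by \Cref{thm:ConjClass}, whose pieces are by definition the components comprising $\CompH(h)$. The computational engine throughout will be two facts recorded earlier: $\ModH(h_0) = (\Id-h_0)L_\aH$ is a submodule of $L_\aH$, and $L_\aH$ is $\sH$-invariant (since $\sH$ normalizes $T_\aH$ inside the group $\aH$, we have $ut^\nu u^{-1}=t^{u\nu}\in T_\aH$, i.e.\ $uL_\aH=L_\aH$ for all $u\in\sH$). From these I would first establish the transformation rule $u\,\ModH(h_0)=\ModH(uh_0u^{-1})$ for $u\in\sH$, via $u(\Id-h_0)L_\aH=(\Id-uh_0u^{-1})\,uL_\aH=(\Id-uh_0u^{-1})L_\aH$. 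Writing $g_0=uh_0u^{-1}$, this lets me expand a component as $u\bigl(t^{\ModH(h_0)}h\bigr)u^{-1}=t^{u\lambda+u\ModH(h_0)}g_0=t^{u\lambda}\,t^{\ModH(g_0)}g_0$, exhibiting every component of $\hconj$ in the normal form $t^{\kappa+\ModH(g_0)}g_0$ with $g_0\in[h_0]_\sH$ and $\kappa=u\lambda$, and showing in particular that linearization is constant (equal to $g_0$) on each component.

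For part (1), I would conjugate a typical element $t^{\kappa+\rho}g_0$ (with $\rho\in\ModH(g_0)$) of a fixed component by a translation $t^\nu\in T_\aH$. Using $g_0t^{-\nu}=t^{-g_0\nu}g_0$ gives $t^\nu t^{\kappa+\rho}g_0\,t^{-\nu}=t^{\kappa+\rho+(\Id-g_0)\nu}g_0$. Since $(\Id-g_0)\nu\in(\Id-g_0)L_\aH=\ModH(g_0)$, the result stays in the same component, so $T_\aH$ acts on each component, and on the index $\rho$ it acts by the translation $\rho\mapsto\rho+(\Id-g_0)\nu$; this is the asserted action by translation. Transitivity is then precisely the identity $\{(\Id-g_0)\nu:\nu\in L_\aH\}=\ModH(g_0)$, which is the definition of the mod-set. (The action factors through $T_\aH$ modulo $\Fix(g_0)\cap L_\aH$ and is in fact simply transitive, but only transitivity is claimed.)

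For part (3), I would define the map $\CompH(h_0)\to[h_0]_\sH$ by sending a component to the common linearization of its elements; by the normal form this sends $u\bigl(t^{\ModH(h_0)}h_0\bigr)u^{-1}\mapsto uh_0u^{-1}$, which is evidently well defined and surjective onto $[h_0]_\sH$. Injectivity is the only real content: if $uh_0u^{-1}=u'h_0u'^{-1}=:g_0$, then both source components equal $t^{\ModH(g_0)}g_0$ as sets, because $u\,\ModH(h_0)=\ModH(g_0)=u'\,\ModH(h_0)$ by the transformation rule. Hence the two components coincide and the map is a bijection. This step is where $\sH$-invariance of $L_\aH$ does the essential work, and I expect it to be the main obstacle: the naive worry is that two conjugators with the same linear part could produce distinct translation cosets, and the whole point is that the mod-set rotates correctly, so that the coset $\ModH(g_0)$ depends only on $g_0$.

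For part (2), linearization is constant on each component of $\hconj$ by the normal form, yielding a surjection $\CompH(h)\to[h_0]_\sH$, $\;u\bigl(t^{\ModH(h_0)}h\bigr)u^{-1}\mapsto uh_0u^{-1}$; composing with the inverse of the bijection from part (3) produces a well-defined map $\CompH(h)\to\CompH(h_0)$, which is surjective because the linearization map onto $[h_0]_\sH$ already is. I would finally note that this surjection need not be injective — in the $\mathbf{cmm}$ example with $h_0=-\Id$ the single target component can have two preimages, matching \Cref{fig:cmm_cuspidal} — so a surjection, rather than a bijection, is the correct statement.
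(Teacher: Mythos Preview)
Your proposal is correct and follows essentially the same route as the paper. You reprove \Cref{lem:modConj} (the transformation rule $u\,\ModH(h_0)=\ModH(uh_0u^{-1})$) and the normal form of \Cref{lem:conjBase}(3), then handle part~(1) by the same translation computation as in \Cref{lem:translationsComponents}, and parts~(2)--(3) by the same linearization argument as in \Cref{lem:componentsInequality}; your explicit injectivity check for~(3) is a bit more detailed than the paper's terse appeal to \Cref{lem:conjBase}, but the content is identical.
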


Part (1) of Theorem~\ref{thm:Components} completes a ``global" picture of the action of $\aH = T_\aH \rtimes \sH$ on itself by conjugation: the elements of $\sH$ permute the components of any conjugacy class, while the translation subgroup $T_\aH$ acts transitively within each component. In Proposition~\ref{prop:equalcomponents}, we determine the stabilizers of components under this $\sH$-action.
Parts (2) and (3) of Theorem~\ref{thm:Components} emphasize again the importance of $\sH$ for the description of conjugacy classes in $\aH$ (see Remark~\ref{rmk:spherical}). In situations where conjugacy classes in $\sH$ are well-understood, such as when $\sH = \On$, we can thus obtain much information about the geometry of conjugacy classes in $\aH$ by ``lifting" from $\sH$. We give some additional results for the special case $\aH = \aG$ in Proposition~\ref{prop:componentsG}.

\subsection{Coconjugation}

In our final main result, we turn to the question of which $k \in \aH$ conjugate a given $h \in \aH$ to some $h'$ in its conjugacy class. We refer to this question as the \emph{coconjugation problem}. That is, given $h, h' \in \aH$, we will determine the (possibly empty) set $\coconjH{h}{h'}$ of elements $k\in \aH$ such that $khk^{-1}=h'$. As we will show, the solution to the coconjugation problem in $\aH$ crucially involves the fix-sets of elements of $\sH$; that is, the orthogonal complement of the spaces ruling the shape of the conjugacy class itself.  

For any $h' \in \hconj$, the coconjugation set $\coconjH{h}{h'}$ is equal to $k \Cent_{\aH}(h)$ for any $k \in \aH$ such that $khk^{-1} = h'$. 
One could hence say that it is enough to consider centralizers to  fully solve the coconjugation problem. However, in \Cref{thm:coconj} below, we provide an intrinsic description of the coconjugation set that does not require prior knowledge of the centralizer, nor the determination of a conjugating element $k$ as used above. 
Instead, the disjoint union in \Cref{thm:coconj} is parametrized by the following explicitly-defined subset of the coconjugation set $\coconj_{\sH}(h_0, h_0')$.

\begin{definition}[Translation-compatible part of the coconjugation set]
	\label{def:transCompCoConj} Let $\aH = T_\aH \rtimes \sH$ be a split group of Euclidean isometries, let $\lambda, \lambda' \in L_\aH$, and let $h_0, h_0' \in \sH$. 
	The \emph{translation-compatible part} of $\coconj_{\sH}(h_0,h_0')$ is defined by:
	\begin{equation}\label{K0_conj}
		\coconj_{\sH}^{\lambda,\lambda'}(h_0,h_0') = \{ u \in \scoconjH{h_0}{h_0'} \mid \lambda' - u\lambda \in \Mod_\aH(h_0') \}.
	\end{equation}
\end{definition}

\begin{thm}[Coconjugation]
	\label{thm:coconj}
	Let $\aH = T_\aH \rtimes \sH$ be a split group of Euclidean isometries. Let $h=t^{\lambda}h_0$ and $h' = t^{\lambda'}h_0'$ be elements of $\aH$, where $\lambda, \lambda' \in L_\aH$ and $h_0, h_0' \in \sH$.  Then 
	\begin{equation}\label{eq:coconjNonempty}
		\coconjH{h}{h'} \neq \emptyset \;\; \Longleftrightarrow\;  \coconj_{\sH}^{\lambda,\lambda'}(h_0,h_0') \neq \emptyset.
	\end{equation}
	Moreover, if these sets are nonempty, then
	\begin{equation}\label{eq:coconjFix}
	\coconjH{h}{h'} = \bigsqcup_{u \in \coconj_{\sH}^{\lambda,\lambda'}(h_0,h_0')} t^{\eta_{u}+ (\Fix(h_0') \cap L_\aH)} u 
	\end{equation}
	where for each $u$, the element $\eta_{u} \in L_\aH$ is a particular solution to the equation 
	\begin{equation}\label{eq:coconj}
	\lambda'- u\lambda=(\Id - h_0')\eta.
	\end{equation}
	In the special case that $\Fix(h_0)=\{0\}$, we have that \[ \eta_u=(\Id-h_0')^{-1}(\lambda'-u\lambda)\] is the unique solution to~\eqref{eq:coconj}, and $\coconjH{h}{h'}$ is in bijection with $\coconj_{\sH}^{\lambda,\lambda'}(h_0,h_0')$.	
\end{thm}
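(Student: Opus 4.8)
The plan is to reduce the conjugacy equation $khk^{-1}=h'$ to a pair of conditions---one on spherical parts and one on translation parts---by a direct computation in the semidirect product. First I would write an arbitrary $k \in \aH$ as $k = t^\mu u$ with $\mu \in L_\aH$ and $u \in \sH$, and expand, using the relations $u t^\nu u^{-1} = t^{u\nu}$ and $(t^\mu u)^{-1} = t^{-u^{-1}\mu}u^{-1}$,
\[
khk^{-1} = t^{\mu + u\lambda - (uh_0u^{-1})\mu}\,(uh_0u^{-1}).
\]
Comparing with $h' = t^{\lambda'}h_0'$ and invoking uniqueness of the translation/spherical decomposition, the equation $khk^{-1}=h'$ holds if and only if (i) $uh_0u^{-1}=h_0'$, that is $u \in \scoconjH{h_0}{h_0'}$, and (ii) after substituting $uh_0u^{-1}=h_0'$, the translation parts agree, which rearranges to $(\Id - h_0')\mu = \lambda' - u\lambda$.

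Next I would interpret condition (ii) module-theoretically. For a fixed $u$ satisfying (i), a solution $\mu \in L_\aH$ of $(\Id - h_0')\mu = \lambda' - u\lambda$ exists precisely when $\lambda' - u\lambda \in (\Id - h_0')L_\aH = \ModH(h_0')$, which is exactly the defining condition for $u \in \coconj_{\sH}^{\lambda,\lambda'}(h_0,h_0')$. This delivers the nonemptiness equivalence \eqref{eq:coconjNonempty} at once: $\coconjH{h}{h'}$ is nonempty if and only if some compatible $u$ exists.

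For the closed form \eqref{eq:coconjFix} I would fix $u \in \coconj_{\sH}^{\lambda,\lambda'}(h_0,h_0')$ and describe the full fibre over $u$. Choosing a particular $\eta_u \in L_\aH$ solving \eqref{eq:coconj}, the set of all solutions $\mu \in L_\aH$ is the affine coset $\eta_u + (\Ker(\Id - h_0') \cap L_\aH) = \eta_u + (\Fix(h_0')\cap L_\aH)$, since any two solutions differ by an element of $L_\aH$ annihilated by $\Id - h_0'$. Translating back to $k = t^\mu u$ yields the piece $t^{\eta_u + (\Fix(h_0')\cap L_\aH)}u$, and the union over $u$ is disjoint because the spherical part $u$ is determined by $k$. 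The special case $\Fix(h_0)=\{0\}$ then follows, since $\Fix(h_0')=u\Fix(h_0)=\{0\}$ forces $\Id - h_0'$ to be invertible, so $\eta_u = (\Id - h_0')^{-1}(\lambda'-u\lambda)$ is the unique solution and each fibre is a single element, giving the asserted bijection.

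I do not expect a serious obstacle; the argument is essentially a single computation. The one point requiring genuine care is the passage from solvability over $\R^n$ to solvability over the $\Z$-module $L_\aH$: the equation $(\Id - h_0')\mu = \lambda' - u\lambda$ must be solved inside $L_\aH$, not merely in $\R^n$, and it is exactly this integrality constraint that makes the mod-set $\ModH(h_0')$ (rather than the ambient move-set) control nonemptiness and makes the restricted fix-set $\Fix(h_0')\cap L_\aH$ (rather than the full fix-set) index each fibre. The only other place an error could enter is the bookkeeping of signs and of the conjugation $uh_0u^{-1}$ in the semidirect-product expansion above.
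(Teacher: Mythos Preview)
Your proposal is correct and is essentially the paper's own argument: the paper isolates exactly your conditions (i) and (ii) as a preliminary proposition (obtained via the same semidirect-product expansion), then reads off \eqref{eq:coconjNonempty} and \eqref{eq:coconjFix} and handles the $\Fix(h_0)=\{0\}$ case just as you do, using that conjugate elements of $\sH$ have conjugate fix-sets. The only organizational difference is that the paper packages the computation and the description of fibres as a separate proposition before assembling the theorem, whereas you carry it out in one pass.
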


\noindent Geometrically,~\eqref{eq:coconjFix} means that the coconjugation set $\coconjH{h}{h'}$ lies along translates of the fix-set $\Fix(h_0')$, and so is orthogonal to $\Mov(h_0')$. The reason for this appearance of the fix-set in our description of coconjugation sets is that we are solving Equation~\eqref{eq:coconj}, and $\Fix(h_0') = \Ker(\Id - h_0')$.  In the special case that $h = h'$, Theorem~\ref{thm:coconj} yields a new geometric description of the centralizer $\Cent_{\aH}(h)$.

When nonemptiness of the set $\coconj_{\sH}^{\lambda,\lambda'}(h_0,h_0')$ can be determined, the equivalence~\eqref{eq:coconjNonempty} in Theorem~\ref{thm:coconj} provides an algorithm to solve the conjugation problem in $\aH$. If, in addition, all elements of $\coconj_{\sH}^{\lambda,\lambda'}(h_0,h_0')$ and all solutions to Equation~\eqref{eq:coconj} can be computed, we obtain an algorithm which lists all elements of the coconjugation set.

  \begin{figure}[h]
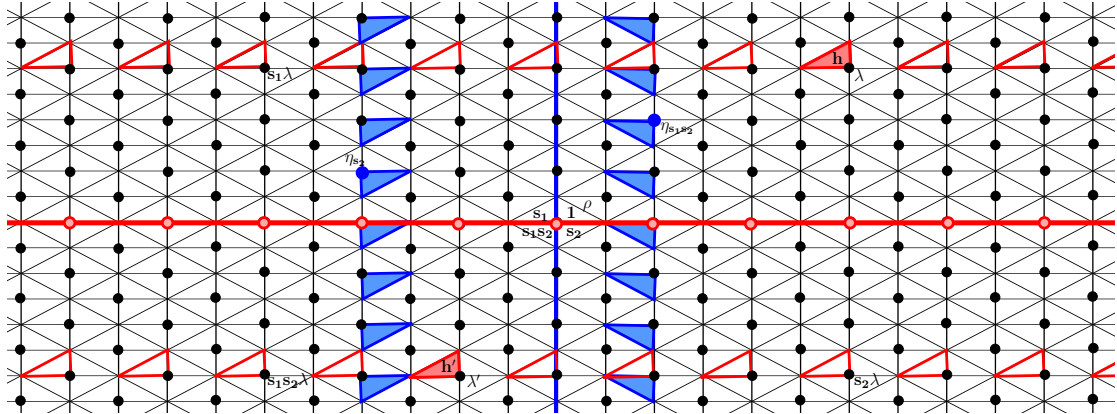

		\resizebox{0.95\textwidth}{!}
		{\begin{overpic}{cmm_conj_coconj_reflection}
		\put(50.5,17.6){$\mathbf{1}$}
		\put(47.5,17.6){$\mathbf{s_1}$}
		\put(50.5,16){$\mathbf{s_2}$}
		\put(52,18.5){$\mathbf{\rho}$}
		\put(46.5,16){$\mathbf{s_1 s_2}$}	
		\put(76.5,30){$\mathbf{\lambda}$}
		\put(74.5,31.5){$\mathbf{h}$}
		\put(41.5,2.3){$\mathbf{\lambda'}$}
		\put(39.2,3.7){$\mathbf{h'}$}
		\put(76.5,2.5){$\mathbf{s_2\lambda}$}
		\put(23.5,30.2){$\mathbf{s_1\lambda}$}
		\put(23.5,2.5){$\mathbf{s_1 s_2 \lambda}$}
		\put(30.5,22.7){\textcolor{black}{$\mathbf{\eta_{s_2}}$}}	
		\put(59,25.6){\textcolor{black}{$\mathbf{\eta_{s_1s_2}}$}}	
		\end{overpic}}
		\vspace{-3mm}
		\caption{\footnotesize{The coconjugation set $\coconjH{h}{h'}$, in blue, in the wallpaper group $\mathbf{cmm}$.}}	
		\label{fig:cmm_conj_coconj_reflection}	
\end{figure}

\begin{example}
In $\aH$ the wallpaper group $\mathbf{cmm}$, let $h = t^\lambda s_1$ and $h' = t^{\lambda'}s_1$ be the conjugates shaded red in Figure~\ref{fig:cmm_conj_coconj_reflection} (compare Figure~\ref{fig:cmm_conj_reflection_cropped}). Here, $\Fix(s_1)$ is the vertical blue line and $\Mov(s_1)$ is the horizontal red line. It turns out that $\coconj_{\sH}^{\lambda, \lambda'}( s_1, s_1) = \{ s_2, s_1 s_2 \}$, and we can choose $\eta_{s_2},\eta_{s_1 s_2} \in L_\aH$ as indicated by blue dots. 
\end{example}

\begin{rmk} The proofs in this work use only the semidirect product structure of split subgroups of $\aG$.   We initially proved a version of Theorem~\ref{thm:ConjClass}, investigated the filling property, and identified some elements of coconjugation sets for certain ``standard representatives" in affine Coxeter groups, as part of our study of affine Deligne--Lusztig varieties (see Section 5.1 of~\cite{MST2}).  We then established the results described above for arbitrary elements of affine Coxeter groups, and subsequently realized that our proofs extended immediately to all split~$\aH \leq \aG$. In~\cite{MST5}, we discuss the results of the present work in relation to the literature on Coxeter groups.
\end{rmk}

\begin{rmk} In our companion paper~\cite{MST5}, for certain split crystallographic groups (see Remark~\ref{rmk:cryst}) we refine the relationships between mod-sets and move-sets observed above. We regard the lattice $L_\aH$ as a free $\Z$-module of rank $n$, and prove in~\cite[Theorem 1.6 and Corollary 1.7]{MST5} 
that for any split crystallographic $\aH = T_\aH \rtimes \sH$ which is contained in an affine Coxeter group, and all $h_0 \in \sH$:
\begin{enumerate}
		\item\label{item:rank} the rank of $\ModH(h_0)$ equals the dimension of the move-set $\Mov(h_0)$; 
		\item\label{item:index} $\ModH(h_0)$ is a finite-index submodule of $\Mov(h_0) \cap L_\aH$; and
		\item\label{item:equal} $h_0$ fills its move-set if and only if $L_\aH / \ModH(h_0)$ is torsion-free.	
		\end{enumerate}
Part \eqref{item:rank} here tells us that the move-set can be viewed as the ``enveloping subspace" of the mod-set, while part~\eqref{item:index} implies that the containments in Corollary~\ref{cor:conjMov} are equalities up to finite index. Part \eqref{item:equal} then provides a mechanism (quotients of free $\Z$-modules) for determining which elements fill their move-sets (see Definition~\ref{defn:filling} above).  

 Our proofs of \eqref{item:rank}--\eqref{item:equal} in \cite{MST5} use properties of affine Coxeter groups beyond their semidirect product structure, including their close relationship to finite Weyl groups. We do not know if \eqref{item:rank}--\eqref{item:equal} hold for split crystallographic groups which are not contained in affine Coxeter groups (see Remark~\ref{rmk:affineCoxeter}).
\end{rmk}


\section{Geometry of conjugacy classes}\label{sec:Conj}

In this section, we describe the geometry of conjugacy classes of elements of split subgroups  $\aH = T_\aH \rtimes \sH$ of the isometry group $\aG$ of $\E^n$. We prove  \Cref{thm:ConjClass} in \Cref{sec:geometryConj}. Components and the  effect of conjugation on them are described in \Cref{sec:componentsAction}, where we prove \Cref{thm:Components}. We close the discussion by considering the special case $\aH = \aG$ in \Cref{sec:componentsG}.

\subsection{Geometric description of conjugacy classes}\label{sec:geometryConj}

The mod-set (see Definition~\ref{defn:mod-set}) is the key player in our geometric description of conjugacy classes. As noted after this  definition, we have $\ModH(h) = (\Id - h)L_\aH$. In the remainder of this paper, we will often work with this formulation, as it is the one which arises in our results.

We first record two easy lemmas concerning mod-sets, then use these together with some elementary computations to prove Theorem~\ref{thm:ConjClass}.

\begin{lemma}\label{lem:modConj} For all $h \in \aH$ and all $u \in \sH$, we have $u \ModH(h) = \ModH(u h u^{-1})$.
\end{lemma}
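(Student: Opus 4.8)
The plan is to reduce everything to the single identity $\ModH(h) = (\Id - h)L_\aH$ together with two structural facts about a split group: that each $u \in \sH$ is a \emph{linear} orthogonal map, and that $\sH$ normalizes $T_\aH$ and hence preserves the module $L_\aH$, i.e.\ $uL_\aH = L_\aH$ for every $u \in \sH$. The statement is then an equivariance check, and the subtlety is only in recording where these two facts are used.

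First I would identify the spherical and translation parts of the conjugate. Writing $h = t^\lambda h_0$, the linearity of $u$ gives $u t^\lambda u^{-1} = t^{u\lambda}$, so that $uhu^{-1} = t^{u\lambda}\,(u h_0 u^{-1})$, with translation part $u\lambda \in L_\aH$ and spherical part $u h_0 u^{-1} \in \sH$. Next I would compute both sides directly on a lattice vector. For $\mu \in L_\aH$ one has $(h - \Id)\mu = (h_0 - \Id)\mu + \lambda$, hence $u(h-\Id)\mu = u h_0 \mu - u\mu + u\lambda$. On the other side, $((uhu^{-1}) - \Id)\nu = u h_0 u^{-1}\nu - \nu + u\lambda$; substituting $\nu = u\mu$ and using $u h_0 u^{-1}(u\mu) = u h_0 \mu$ yields exactly $u h_0\mu - u\mu + u\lambda$. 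Thus the two expressions agree term by term, and as $\mu$ ranges over $L_\aH$ the vector $\nu = u\mu$ ranges over $uL_\aH = L_\aH$. This change of variables gives the set equality $u\,\ModH(h) = \ModH(uhu^{-1})$.

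Alternatively, and perhaps more transparently, I could route through Lemma~\ref{lem:modShift}: since $uhu^{-1}$ has translation part $u\lambda$ and spherical part $uh_0u^{-1}$, that lemma gives $\ModH(uhu^{-1}) = u\lambda + \ModH(uh_0u^{-1})$. The identity $uh_0u^{-1} - \Id = u(h_0-\Id)u^{-1}$, valid because $u$ is linear (so $\Id = uu^{-1}$), together with $u^{-1}L_\aH = L_\aH$, then reduces $\ModH(uh_0u^{-1}) = u(h_0 - \Id)u^{-1}L_\aH$ to $u(h_0-\Id)L_\aH = u\ModH(h_0)$. Combining this with $\ModH(h) = \lambda + \ModH(h_0)$ closes the argument.

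I do not expect a genuine obstacle here, as this is a routine computation. The only point deserving care is to record explicitly where splitness enters---namely that $\sH$ preserves $L_\aH$, which is what keeps the change of variables $\nu = u\mu$ inside the lattice---and to keep track of the fact that $u$ is linear, so that conjugation by $u$ commutes with subtracting $\Id$.
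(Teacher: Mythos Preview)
Your argument is correct and rests on the same two ingredients as the paper's proof: the $\sH$-invariance of $L_\aH$ and the conjugation identity $u(\Id - h)u^{-1} = \Id - uhu^{-1}$. The only difference is packaging. You first decompose $h = t^\lambda h_0$ and then verify the equality either elementwise or via Lemma~\ref{lem:modShift}, whereas the paper applies the conjugation identity directly to the affine map $h$ in a single line:
\[
u(\Id - h)L_\aH \;=\; u(\Id - h)u^{-1}L_\aH \;=\; (\Id - uhu^{-1})L_\aH.
\]
This works because the identity $u(\Id - h)u^{-1} = \Id - uhu^{-1}$ holds for any affine $h$ when $u$ is linear, so the decomposition into translation and spherical parts is unnecessary. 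Your route through Lemma~\ref{lem:modShift} is logically fine (that lemma does not depend on this one), but since it appears after Lemma~\ref{lem:modConj} in the paper, citing it here would create a forward reference; if you adopt that version you should either reorder the lemmas or inline the relevant computation.
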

\begin{proof}  Since $L_\aH$ is $\sH$-invariant, we have
	\[
	u(\Id - h)L_\aH = u(\Id - h)u^{-1}L_\aH = (\Id - u h u^{-1})L_\aH,
	\]
which establishes the result.
\end{proof}

\begin{lemma}\label{lem:modShift} For all $\lambda \in L_\aH$ and all $h_0 \in \sH$, we have $\ModH(t^\lambda h_0) = \lambda + \ModH(h_0)$. 
\end{lemma}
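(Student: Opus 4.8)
The plan is to reduce everything to the definition $\ModH(h) = (h-\Id)L_\aH$, interpreting $(h-\Id)\mu$ exactly as in the definition of the move-set: namely as $h\mu - \mu$, where $h$ acts as an affine isometry on the point $\mu \in \R^n$. Viewed this way, the lemma is precisely the lattice-restricted analogue of the identity $\Mov(t^\lambda g_0) = \lambda + \Mov(g_0)$ quoted from~\cite[Proposition 1.21]{LMPS}, and so I expect the proof to be a one-line computation rather than to require any new idea.

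First I would fix $\mu \in L_\aH$ and evaluate the affine isometry $h = t^\lambda h_0$ on the point $\mu$, using the semidirect product convention to get $h\mu = h_0\mu + \lambda$. Subtracting $\mu$ then gives
\[
(h - \Id)\mu = h\mu - \mu = (h_0 - \Id)\mu + \lambda.
\]
Since $L_\aH$ is $\sH$-invariant we have $h_0\mu \in L_\aH$, so together with $\lambda \in L_\aH$ this confirms that each such element genuinely lies in $L_\aH$ (consistent with the later containment $\ModH(h)\subseteq \Mov(h)\cap L_\aH$).

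Taking the union over all $\mu \in L_\aH$ would then yield
\[
\ModH(t^\lambda h_0) = \{\, (h_0 - \Id)\mu + \lambda \mid \mu \in L_\aH \,\} = \lambda + (h_0 - \Id)L_\aH = \lambda + \ModH(h_0),
\]
which is the desired equality. There is no real obstacle here; the only point requiring any care is interpreting the affine action of $t^\lambda h_0$ on the lattice point $\mu$ correctly, so that the translation vector $\lambda$ emerges as an additive coset shift of the submodule $(h_0-\Id)L_\aH$. This is automatic, since translating a set by the fixed vector $\lambda$ commutes with taking the union over $\mu \in L_\aH$.
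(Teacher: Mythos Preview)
Your proof is correct and essentially identical to the paper's own argument: both compute $(t^\lambda h_0 - \Id)\mu = \lambda + (h_0 - \Id)\mu$ for arbitrary $\mu \in L_\aH$ and then take the union over $\mu$. The paper's version is just the terse one-line chain of equalities you anticipated, without the surrounding commentary.
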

\begin{proof}  For any $\mu \in L_\aH$, we have \[ \lambda + (h_0 - \Id)\mu = (\lambda + h_0\mu) - \mu = t^\lambda h_0 \mu -\mu = (t^\lambda h_0 - \Id)\mu.\] The result follows.
\end{proof}

\begin{proof}[Proof of \Cref{thm:ConjClass}]
	Fix $h=t^\lambda h_0$ and let $k = t^\eta u$ be any element of $\aH$, where $\eta \in L_\aH$ and $u \in \sH$.  Compute that
	\begin{eqnarray*}
	khk^{-1} 
	& = & t^\eta u (t^\lambda h_0) u^{-1} t^{-\eta} \\
	& =&  t^\eta (u t^\lambda u^{-1}) (u h_0 u^{-1}) t^{-\eta} \\
	& = & t^\eta t^{u \lambda} \left( (u h_0 u^{-1}) t^{-\eta}  (u h_0 u^{-1})^{-1} \right) u h_0 u^{-1} \\
	& = & t^{\xi} u h_0 u^{-1} 
	\end{eqnarray*}
	where $\xi =  u \lambda + (\Id-u h_0 u^{-1})\eta.$
	Hence $\hconj$ is of the form
	\begin{eqnarray*}
	\hconj 
	& =& \bigcup_{u \in\sH}\{ t^\xi u h_0 u^{-1} \mid \xi \in u \lambda + \ModH(u h_0 u^{-1})  \}.
	\end{eqnarray*}
	Now $\ModH(u h_0 u^{-1}) = u \ModH(h_0)$ by~\Cref{lem:modConj}, while for any $\mu \in \ModH(h_0)$, we have $u \left( t^\mu h \right) u^{-1} = t^{u (\lambda + \mu)} u h_0 u^{-1}$.
	Combining these yields \Cref{eq:conj1}, and then using Lemma~\ref{lem:modShift}, we obtain \Cref{eq:conj2}.  
\end{proof}

We now relate conjugacy classes to filling (see Definition~\ref{defn:filling}). We first observe:

\begin{lemma}\label{lem:modMov} For all $h \in \aH$, we have 
$\ModH(h) \subseteq \Mov(h) \cap L_\aH$.
\end{lemma}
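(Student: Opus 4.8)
My plan is to prove the two containments $\ModH(h) \subseteq \Mov(h)$ and $\ModH(h) \subseteq L_\aH$ separately, since their intersection is exactly what the lemma claims. The first containment is essentially a triviality that I would dispatch in a single line: because $L_\aH \subseteq \R^n$, applying the map $\Id - h$ gives
$\ModH(h) = (\Id - h)L_\aH \subseteq (\Id - h)\R^n = \Mov(h)$,
using that $\Mov(h) = \Range(h - \Id) = (\Id - h)\R^n$. This is the ``immediate from definitions'' observation already recorded after Definition~\ref{defn:mod-set}.

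The substance lies in the second containment, $\ModH(h) \subseteq L_\aH$. First I would write $h = t^\lambda h_0$ with $\lambda \in L_\aH$ and $h_0 \in \sH$, fix an arbitrary $\mu \in L_\aH$, and expand $(\Id - h)\mu = \mu - h_0\mu - \lambda$. The goal is then to check that each summand lies in $L_\aH$ and invoke closure. Here $\lambda \in L_\aH$ by hypothesis and $\mu \in L_\aH$ by choice, so the only point needing justification is that $h_0\mu \in L_\aH$. This is exactly the $\sH$-invariance of $L_\aH$: it follows from the relation $h_0 t^\mu h_0^{-1} = t^{h_0\mu}$ recorded in Section~\ref{sec:intro-mainresults}, together with the fact that $T_\aH$ is normal in the semidirect product $\aH = T_\aH \rtimes \sH$, so that $t^{h_0\mu} \in T_\aH$ and hence $h_0\mu \in L_\aH$. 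Since $L_\aH$ is a $\Z$-module, and therefore closed under addition and negation, the combination $\mu - h_0\mu - \lambda$ lies in $L_\aH$, yielding $\ModH(h) \subseteq L_\aH$.

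Combining the two containments gives $\ModH(h) \subseteq \Mov(h) \cap L_\aH$. I do not expect any genuine obstacle: the argument is purely a matter of unwinding definitions. The only steps requiring mild care are keeping track of the translation part $\lambda$ when expanding $(\Id - h)\mu$, and recognizing that it is precisely the $\sH$-invariance of $L_\aH$ (not merely its $\Z$-module structure) that keeps $h_0\mu$ inside the lattice. I would also remark that one could run the identical computation with $h - \Id$ in place of $\Id - h$, since $L_\aH = -L_\aH$ makes the two formulations of $\ModH(h)$ interchangeable.
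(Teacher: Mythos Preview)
Your proposal is correct and follows essentially the same approach as the paper: the paper also proves the two containments separately, noting that $\ModH(h) \subseteq \Mov(h)$ is immediate from definitions and that $(h-\Id)L_\aH \subseteq L_\aH$ because $L_\aH$ is an $\aH$-invariant $\Z$-module. Your version simply unpacks the $\aH$-invariance explicitly by decomposing $h = t^\lambda h_0$ and checking $h\mu = h_0\mu + \lambda \in L_\aH$ term by term, whereas the paper states the invariance in one line.
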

\begin{proof} By definitions, $\ModH(h) \subseteq \Mov(h)$. Now $L_\aH$ is an $\aH$-invariant $\Z$-module, so $(h - \Id)L_\aH \subseteq L_\aH$. The result follows.
\end{proof}

\begin{prop}[Conjugacy classes and filling]\label{prop:conjFilling} For all $h = t^\lambda h_0 \in \aH$, the following are equivalent:
\begin{enumerate}
\item\label{fill:h} $h$ fills its move-set; that is, $\ModH(h) = \Mov(h) \cap L_\aH$;
\item\label{fill:h0} $h_0$ fills its move-set; that is, $\ModH(h_0) = \Mov(h_0) \cap L_\aH$;
\item\label{fill:hconjh0} $\hconj    =  \bigcup_{u \in \sH} u \left( t^{\Mov(h_0) \cap L_\aH} h \right) u^{-1}$; and
\item\label{fill:hconjh} $\hconj =   \bigcup_{u \in \sH} t^{u(\Mov(h) \cap L_\aH)} u h_0  u^{-1}$.
\end{enumerate}
\end{prop}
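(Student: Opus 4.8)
The plan is to split the four-way equivalence into purely formal implications and one substantive claim. Write $S=\ModH(h_0)$ and $M=\Mov(h_0)\cap L_\aH$, so that $S\subseteq M$ by Lemma~\ref{lem:modMov} and filling of $h_0$ is exactly $S=M$. The equivalence (1)$\Leftrightarrow$(2) is immediate from the shift formulas: Lemma~\ref{lem:modShift} gives $\ModH(h)=\lambda+S$, and since $h_0\in\On$ we have $\Mov(h)=\lambda+\Mov(h_0)$, so intersecting with $L_\aH$ and using $\lambda\in L_\aH$ gives $\Mov(h)\cap L_\aH=\lambda+M$; subtracting the common shift $\lambda$ shows $\ModH(h)=\Mov(h)\cap L_\aH$ iff $S=M$. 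The forward implications (2)$\Rightarrow$(3) and (1)$\Rightarrow$(4) are then just substitution of $S=M$, respectively $\ModH(h)=\Mov(h)\cap L_\aH$, into the two formulas of Theorem~\ref{thm:ConjClass}.

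For the converses, since (1)$\Leftrightarrow$(2) it suffices to treat (3)$\Rightarrow$(2); the implication (4)$\Rightarrow$(1) reduces to the same thing. Indeed, extracting the $u=\id$ term shows each of (3) and (4) is equivalent to the single containment $t^{M}h\subseteq\hconj$: the reverse containment $\hconj\subseteq\bigcup_{u}u(t^Mh)u^{-1}$ always holds by Corollary~\ref{cor:conjMov}, and conjugation-invariance of $\hconj$ promotes $t^Mh\subseteq\hconj$ back to the full equality. Unwinding membership through the conjugation computation in the proof of Theorem~\ref{thm:ConjClass}, for $v\in M$ the relation $t^vh\in\hconj$ forces some $u\in\Cent_{\sH}(h_0)$ and $\eta\in L_\aH$ with $v=(u-\Id)\lambda+(\Id-h_0)\eta$. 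Since $(\Id-h_0)\eta\in S\subseteq\Mov(h_0)$ and $v\in\Mov(h_0)$, decomposing $\lambda=\lambda_M+\lambda_F$ along $\R^n=\Mov(h_0)\oplus\Fix(h_0)$ forces $u\lambda_F=\lambda_F$. Thus (3) becomes the covering
\[
M=\bigcup_{u\in C}\big((u-\Id)\lambda+S\big),\qquad C:=\{\,u\in\Cent_{\sH}(h_0)\mid u\lambda_F=\lambda_F\,\},
\]
each coset lying in $M$ because $(u-\Id)\lambda=u\lambda-\lambda\in L_\aH\cap\Mov(h_0)=M$ for $u\in C$. So the whole proposition reduces to showing that this covering forces $S=M$.

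This covering step is where I expect the real work to lie. It is false for a general $C$-invariant subgroup and general translate (e.g.\ $\Z=2\Z\cup(1+2\Z)$), so the argument must exploit the specific shape $S=(\Id-h_0)L_\aH$. Two structural facts make this available. First, applying $\Id-h_0$ to $L_\aH$ and using the orthogonal decomposition of the $\R$-span of $L_\aH$ under $h_0$ shows that the $\R$-span of $S$ equals $\Mov(h_0)\cap(\R\text{-span of }L_\aH)$; since $S\subseteq M$ lies in that same subspace, the $\R$-spans of $S$ and $M$ coincide, whence $M/S$ is a torsion group and the union above is genuinely a \emph{finite} union of $S$-cosets. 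Second, $h_0$ acts trivially on $M/S$: for $m\in M\subseteq L_\aH$ we have $h_0m-m=-(\Id-h_0)m\in S$, and in particular $(h_0-\Id)\lambda\in S$, so the coset indexed by $h_0\in C$ coincides with the one indexed by $\id$.

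To finish, I would read the covering as surjectivity onto $M/S$ of the principal $1$-cocycle $\psi\colon C\to M/S$, $\psi(u)=(u-\Id)\lambda\bmod S$, and rule out any nonzero class by lifting the $C$-orbit back into $\Mov(h_0)$: for $u\in C$ the class $\psi(u)$ is represented by $u\lambda_M-\lambda_M=(u-\Id)\lambda$, and $u\lambda_M$ has the same Euclidean norm as $\lambda_M$ since $C$ acts orthogonally. Thus the covering places the entire finite group $M/S$ on the intersection of the sphere of radius $\|\lambda_M\|$ with the coset $\lambda_M+M$, and I would play these norm constraints against the group structure of $M/S$ (together with the triviality of the $h_0$-action just noted) to force $M/S=0$. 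Packaging this orthogonality-and-finiteness argument cleanly is the delicate part; everything in the first two paragraphs is routine.
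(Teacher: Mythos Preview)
Your first two paragraphs are correct and constitute a careful reduction; in particular, your derivation that (3) is equivalent to the covering $M=\bigcup_{u\in C}\big((u-\Id)\lambda+S\big)$ is right and is exactly the honest content of the implication.

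The paper's argument for (3)$\Rightarrow$(2) is much shorter, but it short-circuits precisely the difficulty you isolated. Given $\mu\in M$, the paper asserts that there exist $\eta\in L_\aH$ and $u\in\sH$ with $(t^\eta u)h(t^\eta u)^{-1}=u(t^\mu h)u^{-1}$, the \emph{same} $u$ on both sides, and then computes that this forces $u\mu\in\ModH(uh_0u^{-1})$, hence $\mu\in S$. But unwinding that equation gives $u\mu=(\Id-uh_0u^{-1})\eta$, i.e.\ $\mu=(\Id-h_0)(u^{-1}\eta)\in S$: the existence of such a pair $(\eta,u)$ is \emph{equivalent} to $\mu\in S$, so the step is circular. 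What (3) actually delivers is only $t^\mu h=khk^{-1}$ for some $k=t^\eta u$ with $u\in\Cent_{\sH}(h_0)$, yielding $\mu=(u-\Id)\lambda+(\Id-h_0)\eta$, which is your covering, not $\mu\in S$. So you have located the real work; the paper's one-line passage does not do it.

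That said, your proposed completion is not finished either, and the sphere heuristic does not obviously close the gap: nothing a priori prevents a finite subset of the sphere $\{\,v:\|v+\lambda_M\|=\|\lambda_M\|\,\}$ from surjecting onto $M/S$. One clean partial argument you might record: if $\lambda_M\in L_\aH$ (equivalently $\lambda_M\in M$; this holds for instance when $\Fix(h_0)=\{0\}$, or whenever $L_\aH$ splits along $\Mov(h_0)\oplus\Fix(h_0)$), then $\psi$ is the honest coboundary $\psi(u)=u\bar\lambda_M-\bar\lambda_M$ in $M/S$, and surjectivity forces some $u$ with $u\bar\lambda_M=0$; invertibility of $u$ on $M/S$ gives $\bar\lambda_M=0$, whence $\psi\equiv 0$ and $M=S$. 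The residual case $\lambda_M\notin L_\aH$ is where both your outline and the paper's shortcut leave a genuine gap.
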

\begin{proof} We have $\ModH(h) = \lambda + \ModH(h_0)$ by Lemma~\ref{lem:modMov}, $\Mov(h) = \lambda + \Mov(h_0)$ by~\cite[Proposition 1.21]{LMPS}, and $(\lambda + \Mov(h_0)) \cap L_\aH = \lambda + (\Mov(h_0) \cap L_\aH)$. Hence   \eqref{fill:h} and \eqref{fill:h0} are equivalent. Similarly, \eqref{fill:hconjh0} and \eqref{fill:hconjh} are equivalent.  

Theorem~\ref{thm:ConjClass} gives us that \eqref{fill:h0} implies \eqref{fill:hconjh0} (for instance), and we complete the proof by showing that~\eqref{fill:hconjh0} implies~\eqref{fill:h0}. Suppose \eqref{fill:hconjh0} holds, and let $\mu \in \Mov(h_0) \cap L_\aH$. Then there is a $k = t^\eta u \in \aH$, with $\eta \in L_\aH$ and $u \in \sH$, such that $khk^{-1} = u(t^\mu h) u^{-1}$. As in the proof of Theorem~\ref{thm:ConjClass}, we compute that $khk^{-1} = t^\xi uh_0u^{-1}$ where $\xi = u\lambda + (\Id - uh_0 u^{-1})\eta$. On the other hand, $u(t^\mu h) u^{-1} = ut^\mu u^{-1} u t^\lambda h_0 u^{-1} = t^{u\mu + u\lambda} u h_0 u^{-1}$. Hence
\[
u\lambda + (\Id - uh_0u^{-1})\eta = u\lambda + u\mu.
\]
Thus $u \mu$ is an element of $\ModH(uh_0 u^{-1})$. But by Lemma~\ref{lem:modConj}, we have $\ModH(uh_0 u^{-1}) = u \ModH(h_0)$, so $\mu \in \ModH(h_0)$ as required.
\end{proof}

\subsection{Components of conjugacy classes and action by conjugation}\label{sec:componentsAction}

In this section we define components and prove part~(1) of Theorem~\ref{thm:Components}, as well as describing the stabilizers for the action by conjugation on components, in Proposition~\ref{prop:equalcomponents}.

\begin{definition}[Component]\label{defn:component}  Let $h = t^\lambda h_0 \in \aH$, where $\lambda \in L_\aH$ and $h_0 \in \sH$.  The \emph{base component} of $h$ is the subset of $\aH$ given by
\[
\BaseH(h) = t^{\ModH(h_0)} h =  t^{\lambda+\ModH(h_0)} h_0.
\]  
A \emph{component of $\hconj$} is any of the base components of its elements.   
We write $\CompH(h)$ for the set of components of $\hconj$, and $\#\CompH(h)$ for the cardinality of this set. 
\end{definition}

We start by collecting some first properties of components.

\begin{lemma}[Shape of components]
	\label{lem:conjBase}
Let $h = t^\lambda h_0 \in \aH$, where $\lambda \in L_\aH$ and $h_0 \in \sH$. Then: 
	\begin{enumerate}
		\item $\BaseH(h)=t^\lambda\BaseH(h_0)$;
		\item $u \BaseH(h) u^{-1} = \BaseH(u h u^{-1})$ for all $u \in \sH$; and  		
		\item Every component of $\hconj$ is of the form
		\[
		u\BaseH(h)u^{-1} = t^{u\lambda + u\ModH(h_0)} uh_0u^{-1} = t^{u\lambda} \BaseH(uh_0u^{-1})
		\]
		for some $u \in \sH$.
	\end{enumerate}
\end{lemma}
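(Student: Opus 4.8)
The plan is to derive all three parts directly from the definition of the base component, together with the two mod-set lemmas (\Cref{lem:modConj} and \Cref{lem:modShift}) and \Cref{thm:ConjClass}. The only genuinely conceptual ingredient, used in part~(3), is the observation recorded after \Cref{defn:mod-set} that $\ModH(h_0) = (\Id - h_0)L_\aH$ is a \emph{submodule} of $L_\aH$ when $h_0 \in \sH$, so that it absorbs translation by any of its own elements.

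For part~(1), I would simply unwind definitions: since $\BaseH(h_0) = t^{\ModH(h_0)}h_0$ and $\BaseH(h) = t^{\lambda + \ModH(h_0)}h_0$, we get $t^\lambda \BaseH(h_0) = t^{\lambda + \ModH(h_0)}h_0 = \BaseH(h)$. For part~(2), I would conjugate the set $\BaseH(h) = \{\, t^\mu h \mid \mu \in \ModH(h_0)\,\}$ by $u \in \sH$, using $u t^\mu u^{-1} = t^{u\mu}$, to obtain $u\BaseH(h)u^{-1} = t^{u\ModH(h_0)}\,uhu^{-1}$. Since $uhu^{-1} = t^{u\lambda}uh_0u^{-1}$ has translation part $u\lambda \in L_\aH$ (using $\sH$-invariance of $L_\aH$) and spherical part $uh_0u^{-1}$, the definition of the base component gives $\BaseH(uhu^{-1}) = t^{u\lambda + \ModH(uh_0u^{-1})}uh_0u^{-1}$; applying \Cref{lem:modConj} to rewrite $\ModH(uh_0u^{-1}) = u\ModH(h_0)$ shows the two sides agree. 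This same computation also yields the explicit expressions $u\BaseH(h)u^{-1} = t^{u\lambda + u\ModH(h_0)}uh_0u^{-1} = t^{u\lambda}\BaseH(uh_0u^{-1})$ displayed in part~(3).

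For part~(3), I would first invoke \Cref{thm:ConjClass} in the form $\hconj = \bigcup_{u\in\sH} u\BaseH(h)u^{-1}$, so that every element $h'$ of $\hconj$ lies in some $u\BaseH(h)u^{-1} = \BaseH(uhu^{-1})$ by part~(2). Writing $g = uhu^{-1} = t^{\lambda_g}g_0$, an element $h' \in \BaseH(g)$ has the form $h' = t^{\nu}g = t^{\nu + \lambda_g}g_0$ with $\nu \in \ModH(g_0)$, and hence spherical part $g_0$ and translation part $\nu + \lambda_g$. Computing its base component and using that $\ModH(g_0)$ is a submodule of $L_\aH$ containing $\nu$, so that $\nu + \ModH(g_0) = \ModH(g_0)$, gives $\BaseH(h') = t^{(\nu + \lambda_g) + \ModH(g_0)}g_0 = t^{\lambda_g + \ModH(g_0)}g_0 = \BaseH(g) = u\BaseH(h)u^{-1}$. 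Thus the component of $\hconj$ containing $h'$, namely its base component, has the asserted form.

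I do not anticipate any real obstacle; the one step requiring care is the verification in part~(3) that passing to the base component of an arbitrary element of a component reproduces that same component, which relies precisely on the submodule (hence translation-absorbing) property of $\ModH(g_0)$.
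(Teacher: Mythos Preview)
Your proof is correct and follows essentially the same route as the paper's: part~(1) from the definition, part~(2) via a direct computation together with \Cref{lem:modConj}, and part~(3) by combining these with \Cref{thm:ConjClass}. The paper's own argument is extremely terse (``Combining the first two items with \Cref{thm:ConjClass}, the third item follows''), and you have made explicit the one step it leaves to the reader, namely that $\nu + \ModH(g_0) = \ModH(g_0)$ because $\ModH(g_0)$ is a submodule of $L_\aH$; this is indeed the point needed to pass from an arbitrary $h' \in u\BaseH(h)u^{-1}$ back to $\BaseH(h') = u\BaseH(h)u^{-1}$.
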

\begin{proof}
	Item (1) is clear from the definition. A straightforward computation using  Lemma~\ref{lem:modConj} implies (2). Combining the first two items with \Cref{thm:ConjClass}, the third item follows. 
\end{proof}

We now consider the effect of conjugating by a translation in $\aH$.

\begin{lemma}
\label{lem:translationsComponents}  
Let $h = t^\lambda h_0 \in \aH$, where $\lambda \in L_\aH$ and $h_0 \in \sH$. 
\begin{enumerate} 
\item\label{lem:conjTrans} For all $\eta \in L_\aH$ and all $k\in \BaseH(h)$ one has 
\[
t^\eta k t^{-\eta} = t^{\eta'} k,
\]
where $\eta' = (\Id - h_0)\eta \in \ModH(h_0)$. In particular, $t^\eta k t^{-\eta} \in \BaseH(h)$.  
	
\item For all $\eta \in L_\aH$ and $u \in \sH$, 
\[
t^\eta \left( u \BaseH(h) u^{-1} \right) t^{-\eta} = u \BaseH(h) u^{-1}.
\]
\item For all $h',h'' \in u \BaseH(h) u^{-1}$, there is an $\eta \in L_\aH$ such that 
\[
t^\eta h' t^{-\eta} = h''.
\]
\end{enumerate}
\end{lemma}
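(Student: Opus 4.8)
The plan is to prove all three parts by direct computation in the semidirect product $\aH = T_\aH \rtimes \sH$, using only the single conjugation relation $h_0 t^\mu h_0^{-1} = t^{h_0\mu}$ together with the fact (recorded after Definition~\ref{defn:mod-set}) that $\ModH(h_0) = (\Id - h_0)L_\aH$ is a submodule of $L_\aH$.

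For part~(1), I would write an arbitrary element of $\BaseH(h) = t^{\lambda + \ModH(h_0)}h_0$ as $k = t^\nu h_0$ with $\nu \in \lambda + \ModH(h_0)$. Conjugating by $t^\eta$ and pushing the translation $t^{-\eta}$ past $h_0$ via $h_0 t^{-\eta} h_0^{-1} = t^{-h_0\eta}$ gives
\[
t^\eta k t^{-\eta} = t^{\eta + \nu} h_0 t^{-\eta} = t^{\eta + \nu} t^{-h_0\eta} h_0 = t^{\nu + (\Id - h_0)\eta} h_0 = t^{(\Id - h_0)\eta} k,
\]
so that $\eta' = (\Id - h_0)\eta$, which lies in $(\Id - h_0)L_\aH = \ModH(h_0)$. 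Since $\ModH(h_0)$ is a submodule and $\nu - \lambda \in \ModH(h_0)$, the exponent $\nu - \lambda + (\Id - h_0)\eta$ again lies in $\ModH(h_0)$, giving $t^\eta k t^{-\eta} \in \BaseH(h)$ as claimed.

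For part~(2), I would first replace $u\BaseH(h)u^{-1}$ by $\BaseH(uhu^{-1})$ using Lemma~\ref{lem:conjBase}\,(2), reducing the claim to the statement that $t^\eta \BaseH(g) t^{-\eta} = \BaseH(g)$ for $g = uhu^{-1}$. Part~(1), applied to $g$ in place of $h$, shows $t^\eta \BaseH(g) t^{-\eta} \subseteq \BaseH(g)$; applying part~(1) once more with $-\eta$ in place of $\eta$ and then conjugating back by $t^\eta$ yields $\BaseH(g) \subseteq t^\eta \BaseH(g) t^{-\eta}$, and hence equality.

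For part~(3), I again write $g = uhu^{-1}$ with spherical part $g_0 = uh_0 u^{-1}$, so that by Lemma~\ref{lem:conjBase}\,(3) the component $u\BaseH(h)u^{-1}$ equals $\BaseH(g) = t^{u\lambda + \ModH(g_0)} g_0$. Any two of its elements can then be written as $h' = t^{u\lambda + \alpha} g_0$ and $h'' = t^{u\lambda + \beta} g_0$ with $\alpha, \beta \in \ModH(g_0)$. By part~(1), conjugation by $t^\eta$ sends $h'$ to $t^{(\Id - g_0)\eta} h'$, so it suffices to solve $(\Id - g_0)\eta = \beta - \alpha$ for some $\eta \in L_\aH$. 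The main (and essentially only) point is that such a solution exists: this is exactly the assertion that $\beta - \alpha$ lies in $(\Id - g_0)L_\aH = \ModH(g_0)$, which holds because $\alpha, \beta \in \ModH(g_0)$ and $\ModH(g_0)$ is a submodule. This step is where the transitivity of the translation action on a single component comes from, and it is the crux of the argument; everything else is formal manipulation in the semidirect product.
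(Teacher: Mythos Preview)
Your proof is correct and follows essentially the same route as the paper's: the same direct computation for (1), the same reduction via Lemma~\ref{lem:conjBase} for (2), and in (3) the same key observation that the required translation shift lies in $\ModH(g_0)=(\Id-g_0)L_\aH$. The only cosmetic difference is that in (3) you apply part~(1) directly to $g=uhu^{-1}$, whereas the paper re-derives that formula as an intermediate ``interim'' equation and then reduces to the base component; the content is the same.
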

\begin{proof} 
To see (1) let $k=t^{\lambda+\mu}h_0$ for $\mu \in \ModH(h_0)$ and  compute $ t^\eta \left( t^{\lambda + \mu} h_0 \right) t^{-\eta} = t^{\lambda + \mu} t^\eta t^{- h_0 \eta} h_0 = t^{\lambda + \mu + (\Id - h_0)\eta}h_0 = t^{(\Id - h_0)\eta} k$, 
as required.	
For (2), by \Cref{lem:conjBase} it suffices to consider $t^\eta \BaseH(h) t^{-\eta}$.  The result then follows from (1).  

As a preparation for (3) observe the following: an arbitrary element $h'$ of the component $u \BaseH(h) u^{-1}$ has the form $h' = t^{u(\lambda + \mu')}u h_0 u^{-1}$, where $\mu' \in \ModH(h_0)$.  Then a similar computation to that in the proof of part~\eqref{lem:conjTrans} establishes the following equation: 
for all $\eta \in L_\aH$ and all $h' \in  \BaseH(uhu^{-1})$, we have
\begin{equation}
	\label{eq:interim}
	t^\eta h' t^{-\eta} = t^{\eta'} h'
\end{equation}
where $\eta' = (\Id - u h_0 u^{-1})\eta \in \ModH(u h_0 u^{-1})$.  In particular, $t^\eta h' t^{-\eta}  \in \BaseH(uhu^{-1})$.

For (3), since $L_\aH$ is $\sH$-invariant, it suffices to consider the base component.  So let $h' = t^{\lambda + \mu'} h_0 $ and $h'' = t^{\lambda + \mu''} h_0 $ be elements of $\BaseH(h_0)$, where $\mu',\mu'' \in \ModH(h_0)$.   Then 
\[
h'' (h')^{-1} = t^{\lambda + \mu''} h_0 h_0^{-1} t^{-(\lambda +\mu')} = t^{\mu'' - \mu'}
\]
and so putting $\eta' = \mu'' - \mu'$, we have $h'' = t^{\eta'} h'$.  But then $\eta' \in \ModH(h_0)$, and so $\eta' = (\Id - h_0)\eta$ for some $\eta \in L_\aH$.  Combining this with \Cref{eq:interim} completes the proof.
\end{proof}

The next result establishes Theorem~\ref{thm:Components}(1).

\begin{corollary}[Conjugation action]
	\label{prop:ConjAction} 
Let $h = t^\lambda h_0 \in \aH$, where $\lambda \in L_\aH$ and $h_0 \in \sH$.
	\begin{enumerate}
		\item The conjugation action of $\sH$ on $\aH$ induces a transitive action of $\sH$ on the set $\CompH(h)$. 
		\item The conjugation action of $T_\aH$ on $\aH$ stabilizes each component of $\hconj$ setwise, and acts transitively when restricted to any of the components. 
	\end{enumerate}
\end{corollary}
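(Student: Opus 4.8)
The plan is to deduce both parts directly from the structural lemmas \Cref{lem:conjBase} and \Cref{lem:translationsComponents}, which already package essentially all of the required computations. The only genuine content beyond invoking these is to confirm that conjugation by $\sH$ yields a well-defined permutation action on the set $\CompH(h)$, so that the phrase ``transitive action'' is even meaningful.

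First, for part (1), I would verify that $\sH$ permutes the components. By part (3) of \Cref{lem:conjBase}, an arbitrary component has the form $v\BaseH(h)v^{-1}$ for some $v \in \sH$; conjugating it by $u \in \sH$ gives $(uv)\BaseH(h)(uv)^{-1}$, which by part (2) of \Cref{lem:conjBase} equals $\BaseH\bigl((uv)h(uv)^{-1}\bigr)$, a base component of an element of $\hconj$ and hence again a component. Thus the conjugation action of $\sH$ on $\aH$ restricts to an action on $\CompH(h)$, with the action axioms inherited from the conjugation action of $\aH$ on itself. Transitivity is then immediate: the description $v\BaseH(h)v^{-1}$ already exhibits every component as the image of the base component $\BaseH(h)$ under conjugation by $v$.

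For part (2), both claims are restatements of \Cref{lem:translationsComponents}. Writing an arbitrary component as $u\BaseH(h)u^{-1}$ (again via part (3) of \Cref{lem:conjBase}), part (2) of \Cref{lem:translationsComponents} gives $t^\eta\bigl(u\BaseH(h)u^{-1}\bigr)t^{-\eta} = u\BaseH(h)u^{-1}$ for every $\eta \in L_\aH$, so the conjugation action of $T_\aH$ fixes each component setwise. Transitivity within a fixed component is precisely part (3) of \Cref{lem:translationsComponents}: any two of its elements are interchanged by conjugation by a suitable translation $t^\eta$.

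I do not anticipate a serious obstacle; the corollary is a bookkeeping consequence of the preceding lemmas. The single point requiring care — and the only place something could go wrong — is the well-definedness of the $\sH$-action in part (1): one must check that conjugating a component again lands in $\CompH(h)$, rather than producing a set that is a base component for no element of the class. This is handled cleanly by combining parts (2) and (3) of \Cref{lem:conjBase}, which together show that $\{v\BaseH(h)v^{-1} \mid v \in \sH\}$ is exactly $\CompH(h)$ and is closed under conjugation by $\sH$.
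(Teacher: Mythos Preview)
Your proposal is correct and follows essentially the same route as the paper: part (1) is immediate from the description of components as $\sH$-conjugates of $\BaseH(h)$ (the paper cites \Cref{thm:ConjClass} and \Cref{defn:component} directly, you go via \Cref{lem:conjBase}, which amounts to the same thing), and part (2) is exactly \Cref{lem:translationsComponents}. Your extra care about well-definedness of the $\sH$-action is more explicit than the paper's one-line treatment but adds no new content.
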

\begin{proof}
	It is immediate from \Cref{thm:ConjClass} and \Cref{defn:component} that the conjugacy class $\hconj$ is the union of  its components, and that $\sH$ acts transitively by conjugation on the elements of $\CompH(h)$. This establishes (1).   
	Item (2) follows from Lemma~\ref{lem:translationsComponents}. 
\end{proof}

We close this section by describing the stabilizers of components under the conjugation action of $\sH$ on $\CompH(h)$. An alternative point of view here is that we are attempting to describe the conjugacy class $\hconj$ as a disjoint union of components. 
Recall from \Cref{thm:ConjClass} and \Cref{defn:component} that 
\begin{equation}
	\hconj    
	= \bigcup_{u \in \sH} u \BaseH(h) u^{-1}. 
\end{equation}
So in order to write this union as a disjoint union we need to understand which conjugates of the base component are equal. It suffices, as done in the next proposition, to find conditions that characterize the case where $u \BaseH(h) u^{-1} = \BaseH(h) $. That is, we will determine the stabilizer of $\BaseH(h)$ in $\sH$.

\begin{prop}
	\label{prop:equalcomponents}
	Let $h = t^\lambda h_0 \in \aH$, where $\lambda \in L_\aH$ and $h_0 \in \sH$.  Then for all $u \in \sH$, 
	\[
		u \BaseH(h) u^{-1} =  \BaseH(h)
 		\Longleftrightarrow 
		\begin{cases}
			\text{(1) } & u\in\Cent_{\sH}(h_0), \;\text{ and }\\
 			\text{(2) } & (\Id-u)\lambda \in \Mod_\aH(h_0).
		\end{cases} \\
	\]
\end{prop}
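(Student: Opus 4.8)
The plan is to reduce the set-theoretic identity $u\BaseH(h)u^{-1} = \BaseH(h)$ to the two scalar conditions by exploiting the uniqueness of the semidirect product decomposition $\aH = T_\aH \rtimes \sH$. First I would record, via Lemma~\ref{lem:conjBase}(3), that
\[
u\BaseH(h)u^{-1} = t^{u\lambda + u\ModH(h_0)}\, uh_0u^{-1},
\]
so that every element of this conjugated component has spherical part $uh_0u^{-1}$ and translation part lying in the coset $u\lambda + u\ModH(h_0)$, while every element of $\BaseH(h) = t^{\lambda + \ModH(h_0)}h_0$ has spherical part $h_0$ and translation part in $\lambda + \ModH(h_0)$. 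Since each element of $\aH$ is uniquely of the form $t^\mu h_0'$ with $\mu \in L_\aH$ and $h_0' \in \sH$, the two subsets coincide if and only if their common spherical parts agree and their sets of translation parts agree.

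For the forward direction, I would argue that equality of the two sets forces $uh_0u^{-1} = h_0$, since the spherical part is constant across each set; this is exactly condition~(1), that $u \in \Cent_{\sH}(h_0)$. Given (1), Lemma~\ref{lem:modConj} yields $u\ModH(h_0) = \ModH(uh_0u^{-1}) = \ModH(h_0)$, so the conjugated component simplifies to $t^{u\lambda + \ModH(h_0)}h_0$. Equality of the translation parts then reads $u\lambda + \ModH(h_0) = \lambda + \ModH(h_0)$ as cosets of the submodule $\ModH(h_0)$ (recall from the discussion after Definition~\ref{defn:mod-set} that $\ModH(h_0)$ is a submodule of $L_\aH$), which is equivalent to $(u-\Id)\lambda \in \ModH(h_0)$. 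As $\ModH(h_0)$ is closed under negation, this is precisely condition~(2), that $(\Id - u)\lambda \in \ModH(h_0)$.

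The converse simply runs these implications backwards: assuming (1), Lemma~\ref{lem:modConj} again gives $uh_0u^{-1} = h_0$ and $u\ModH(h_0) = \ModH(h_0)$, so $u\BaseH(h)u^{-1} = t^{u\lambda + \ModH(h_0)}h_0$; assuming (2) then gives $u\lambda + \ModH(h_0) = \lambda + \ModH(h_0)$, whence $u\BaseH(h)u^{-1} = t^{\lambda + \ModH(h_0)}h_0 = \BaseH(h)$. I expect the only point requiring genuine care to be the order of the argument in the forward direction: one must extract condition~(1) \emph{first}, directly from the constancy of the spherical part, before invoking Lemma~\ref{lem:modConj} to identify $u\ModH(h_0)$ with $\ModH(h_0)$, since that identification is valid only once $u$ is known to centralize $h_0$. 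Everything else is routine coset bookkeeping inside the submodule $\ModH(h_0) \leq L_\aH$.
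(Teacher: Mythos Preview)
Your proposal is correct and follows essentially the same route as the paper: both arguments compute $u\BaseH(h)u^{-1}$ via Lemma~\ref{lem:conjBase}, read off condition~(1) from equality of spherical parts, and then reduce the translation-part equality to condition~(2). The only cosmetic difference is that the paper carries the expression as $\ModH(uh_0u^{-1})$ and substitutes $uh_0u^{-1}=h_0$ once (1) is known, whereas you carry it as $u\ModH(h_0)$ and invoke Lemma~\ref{lem:modConj}; these are equivalent.
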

\begin{proof}
We compute: 
\[
u \BaseH(h) u^{-1} = t^{u\lambda} u\BaseH(h_0)u^{-1} = t^{u\lambda} \BaseH(u h_0 u^{-1}). 
\]	
Hence $u \BaseH(h) u^{-1}=\BaseH(h)$ is equivalent to 
$
t^{u\lambda} \BaseH(u h_0 u^{-1})= t^\lambda\BaseH(h_0)
$
and hence to
\[
t^{u\lambda+\ModH(uh_0 u^{-1})} u h_0 u^{-1} = t^{\lambda+\ModH(h_0)} h_0.
\]
This equation holds if and only if both $u h_0 u^{-1}=h_0$ (and hence (1) is satisfied) and the following equation holds: 
\begin{equation}\label{eqn:modsetsequal}
u\lambda+\ModH(uh_0 u^{-1})=\lambda+\ModH(h_0). 
\end{equation}
Under condition (1), Equation~\eqref{eqn:modsetsequal} is equivalent to (2). 
\end{proof}

We note that condition (2) in \Cref{prop:equalcomponents} implies that the mod-sets of $u$ and $h_0$ intersect, and that if $u$ fixes $\lambda$ then (2) holds. It seems difficult to make any further general observations on this condition.

The next result completes the proof of Theorem~\ref{thm:Components}.

\begin{lemma}\label{lem:componentsInequality} 
	Let $h = t^\lambda h_0 \in \aH$, where $\lambda \in L_\aH$ and $h_0 \in \sH$. 
	\begin{enumerate}
		\item The linearization map sending $\lambda + \ModH(h_0)$ to $\ModH(h_0)$ induces a surjection
		\[
		\Comp\hconj \twoheadrightarrow \Comp[h_0]_\aH. 
		\]
		Hence $\#\Comp\hconj \geq \#\Comp[h_0]_\aH.$ 
		\item There is a natural bijection between the components of $[h_0]_\aH$ and the elements of the spherical conjugacy class $[h_0]_{\sH}$. Hence \[\#\Comp[h_0]_\aH = \# \{ u h_0 u^{-1} \mid u \in \sH \}.\] 
	\end{enumerate}  
\end{lemma}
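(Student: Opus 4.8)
The plan is to handle both parts by exploiting the transitive $\sH$-action on components from Corollary~\ref{prop:ConjAction}(1), so that everything reduces to a comparison of point-stabilizers as computed in Proposition~\ref{prop:equalcomponents}. For part (1), I would define the candidate map $\Psi \colon \Comp\hconj \to \Comp[h_0]_\aH$ by sending a component $u\BaseH(h)u^{-1}$ to $\BaseH(uh_0u^{-1}) = u\BaseH(h_0)u^{-1}$; on the base component this is exactly the map $t^{\lambda + \ModH(h_0)}h_0 \mapsto t^{\ModH(h_0)}h_0$ subtracting $\lambda$ from the translation part, i.e.\ the linearization named in the statement. By Lemma~\ref{lem:conjBase}(3) every component of either conjugacy class has the displayed form, so $\Psi$ is defined on all of $\Comp\hconj$ and is visibly surjective onto $\Comp[h_0]_\aH$.

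The only point requiring an argument is that $\Psi$ is well-defined, and this is precisely where Proposition~\ref{prop:equalcomponents} does the work. Suppose $u\BaseH(h)u^{-1} = u'\BaseH(h)u'^{-1}$; putting $v = u'^{-1}u$, this says $v\BaseH(h)v^{-1} = \BaseH(h)$, so Proposition~\ref{prop:equalcomponents} yields in particular its condition~(1), namely $v \in \Cent_{\sH}(h_0)$. Hence $uh_0u^{-1} = u'h_0u'^{-1}$, and therefore $\BaseH(uh_0u^{-1}) = \BaseH(u'h_0u'^{-1})$, which is exactly well-definedness. Equivalently, one records that the stabilizer of $\BaseH(h)$ in $\sH$ is contained in the stabilizer of $\BaseH(h_0)$, the latter being all of $\Cent_{\sH}(h_0)$ since condition~(2) of Proposition~\ref{prop:equalcomponents} is automatic when $\lambda = 0$; this containment of stabilizers lets the identity on $\sH$ descend to a surjection of the two transitive $\sH$-sets. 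The cardinality bound $\#\Comp\hconj \geq \#\Comp[h_0]_\aH$ is then immediate from surjectivity of $\Psi$.

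For part (2), I would use that every component of $[h_0]_\aH$ carries a well-defined common spherical part: all elements of $\BaseH(g_0) = t^{\ModH(g_0)}g_0$ have spherical part $g_0$, so the assignment $\Phi \colon \BaseH(g_0) \mapsto g_0$ is a well-defined function on components taking values in $[h_0]_{\sH}$. Surjectivity is clear, since $g_0 = uh_0u^{-1}$ is the image of the component $\BaseH(uh_0u^{-1})$. For injectivity I would note that $\ModH(g_0) = (\Id - g_0)L_\aH$ depends only on $g_0$, so $\BaseH(g_0)$ is entirely determined by $g_0$; hence $\Phi(\BaseH(g_0)) = \Phi(\BaseH(g_0'))$ forces $g_0 = g_0'$ and thus $\BaseH(g_0) = \BaseH(g_0')$. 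This gives the bijection, and therefore $\#\Comp[h_0]_\aH = \#[h_0]_{\sH}$. (From the orbit–stabilizer viewpoint this is the statement that $\sH$ acts transitively on $\Comp[h_0]_\aH$ and on $[h_0]_{\sH}$ with the common point-stabilizer $\Cent_{\sH}(h_0)$.)

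I expect no serious obstacle here: the entire content is bookkeeping built on Proposition~\ref{prop:equalcomponents} and Lemma~\ref{lem:conjBase}. The one step to state with care is the well-definedness in part~(1) — that equality of two components of $\hconj$ forces equality of their spherical parts, which is what guarantees linearization passes to the quotient — and this is exactly condition~(1) of Proposition~\ref{prop:equalcomponents}; everything else (surjectivity in part~(1), and both well-definedness and injectivity in part~(2)) is formal.
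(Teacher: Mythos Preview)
Your argument is correct. The paper's own proof is far terser than yours: it simply asserts that part~(1) follows from Definition~\ref{defn:component} and part~(2) from that definition together with Lemma~\ref{lem:conjBase} applied with $h = h_0$, without invoking Proposition~\ref{prop:equalcomponents} at all.

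Your route through Proposition~\ref{prop:equalcomponents} for well-definedness in part~(1) is valid but heavier than necessary. A more direct observation suffices: every element of a component $u\BaseH(h)u^{-1} = t^{u(\lambda + \ModH(h_0))}\,uh_0u^{-1}$ has the \emph{same} spherical part $uh_0u^{-1}$, so the assignment ``component $\mapsto$ spherical part of any member $\mapsto$ base component of that spherical part'' is automatically well-defined, with no need to compare two presentations $u\BaseH(h)u^{-1} = u'\BaseH(h)u'^{-1}$. This is presumably what the paper intends by ``follows from the definition''. That said, your stabilizer approach reaches the same conclusion and has the advantage of making transparent exactly when the surjection in part~(1) fails to be injective --- namely, when condition~(2) of Proposition~\ref{prop:equalcomponents} cuts the stabilizer of $\BaseH(h)$ strictly below $\Cent_{\sH}(h_0)$.
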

\begin{proof} Part (1) follows from the definition of components (see Definition~\ref{defn:component}), and part~(2) from this definition and Lemma~\ref{lem:conjBase} with $h = h_0$.
\end{proof}

Figure~\ref{fig:cmm_cuspidal} shows that the inequality in \Cref{lem:componentsInequality}(1) can be either strict or an equality, depending upon some fine behavior.
It would be interesting to see whether one can characterize those elements for which  equality holds.

\subsection{Components in $\aG$}\label{sec:componentsG}

We now consider components in the special case $\aH = \aG$.

\begin{prop}\label{prop:componentsG}  Let $g = t^\lambda g_0 \in \aG$, where $\lambda \in \R^n$ and $g_0 \in \On$. 
\begin{enumerate} 
\item If $\Mov(g_0) = \R^n$, then $[g]_\aG = [g_0]_\aG$ and the components of this conjugacy class are the sets of the form
\[
\{ t^\mu u g_0 u^{-1} \mid \mu \in \R^n, u \in \On \}. 
\]
\item  If $\Mov(g_0)$ is $m$-dimensional with $0 < m < n$, and $\lambda \not \in \Mov(g_0)$, then the fibers of the  surjection
\[
\CompG(g) \twoheadrightarrow \CompG(g_0)
\]
induced by linearization are all of cardinality $\geq 2$.  Moreover, in this case the sets $\CompG(g)$ and $\CompG(g_0)$ both have cardinality $\aleph_1$.
\end{enumerate}
\end{prop}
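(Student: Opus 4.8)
The plan is to specialize the general machinery of \Cref{sec:componentsAction} to $\aH = \aG$, where $L_\aG = \R^n$ and $\sH = \On$, so that the mod-set coincides with the move-set: $\ModG(g_0) = (g_0 - \Id)\R^n = \Range(g_0 - \Id) = \Mov(g_0)$. For part~(1), I assume $\Mov(g_0) = \R^n$, equivalently $\ModG(g_0) = \R^n$, so that $t^{\ModG(g_0)}g = t^{\R^n}g_0$. Then by \Cref{thm:ConjClass} both $[g]_\aG$ and $[g_0]_\aG$ equal $\bigcup_{u \in \On} t^{\R^n} u g_0 u^{-1}$, giving $[g]_\aG = [g_0]_\aG$. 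The description of components is then read off from \Cref{lem:conjBase}(3): since $u\,\ModG(g_0) = u\R^n = \R^n$, each component $u\BaseG(g)u^{-1}$ collapses to $t^{\R^n} u g_0 u^{-1} = \{ t^\mu u g_0 u^{-1} \mid \mu \in \R^n\}$, and letting $u$ range over $\On$ produces all of them.

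For part~(2), I would first realize the linearization surjection $\pi\colon \CompG(g) \twoheadrightarrow \CompG(g_0)$ of \Cref{lem:componentsInequality}(1) as a map of homogeneous $\On$-sets. Both $\CompG(g)$ and $\CompG(g_0)$ carry transitive conjugation actions of $\On$ (\Cref{prop:ConjAction}(1)), and $\pi$ is $\On$-equivariant, so $\pi$ is the natural projection $\On/S \twoheadrightarrow \On/C$, where $C = \Stab_{\On}(\BaseG(g_0)) = \Cent_{\On}(g_0)$ and $S = \Stab_{\On}(\BaseG(g))$. By \Cref{prop:equalcomponents} (with $L_\aG = \R^n$), $S = \{ u \in \Cent_{\On}(g_0) \mid (\Id - u)\lambda \in \Mov(g_0)\} \subseteq C$; consequently every fiber of $\pi$ is a coset, all of the common cardinality $[C:S]$. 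To force $[C:S] \geq 2$ it then suffices to exhibit a single element of $C \setminus S$. Using the orthogonal decomposition $\R^n = \Mov(g_0) \oplus \Fix(g_0)$, I take $u_0 = \Id_{\Mov(g_0)} \oplus (-\Id_{\Fix(g_0)})$. Since $g_0$ acts as the identity on $\Fix(g_0)$ and preserves both summands, $u_0$ commutes with $g_0$, so $u_0 \in \Cent_{\On}(g_0) = C$. Writing $\lambda = \lambda_M + \lambda_F$ along this decomposition, the hypothesis $\lambda \notin \Mov(g_0)$ forces $\lambda_F \neq 0$; then $(\Id - u_0)\lambda = 2\lambda_F \in \Fix(g_0)\setminus\{0\}$, which is not in $\Mov(g_0)$, so $u_0 \notin S$ and $[C:S]\ge 2$.

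For the cardinality assertion I would invoke the bijection $\CompG(g_0) \leftrightarrow [g_0]_{\On}$ of \Cref{lem:componentsInequality}(2). Because $0 < m < n$, the element $g_0$ is neither $\Id$ (which would force $m = 0$) nor $-\Id$ (which would force $m = n$), hence is noncentral in $\On$, so its spherical conjugacy class $[g_0]_{\On}$ is infinite; thus $\CompG(g_0)$ is infinite, and surjectivity of $\pi$ transfers infiniteness to $\CompG(g)$. The main obstacle is the fiber computation of part~(2): everything hinges on correctly identifying the stabilizer $S$ through \Cref{prop:equalcomponents} and producing a centralizing element that detects the $\Fix(g_0)$-component of $\lambda$ — the reflection $u_0$ fixing $\Mov(g_0)$ and negating $\Fix(g_0)$ does exactly this — while the equivariant reformulation of $\pi$ as $\On/S \to \On/C$ is what guarantees the bound $[C:S]\ge 2$ holds simultaneously on every fiber rather than merely over the base component.
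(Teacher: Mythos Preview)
Your argument for part~(1) matches the paper's. For the fiber bound in part~(2) you take a different, and in some ways cleaner, route: the paper simply conjugates by the central element $-\Id$ and observes directly that $\lambda + \Mov(g_0)$ and $-\lambda + \Mov(g_0)$ are distinct affine subspaces (since $\lambda \notin \Mov(g_0)$), hence $\BaseG(g)$ and $(-\Id)\BaseG(g)(-\Id)$ are distinct components with the same linearization; it then propagates this to all fibers by conjugating. Your version instead identifies $\pi$ with $\On/S \twoheadrightarrow \On/C$ via \Cref{prop:equalcomponents}, reducing the question to $[C:S] \ge 2$, and exhibits $u_0 = \Id_{\Mov(g_0)} \oplus (-\Id_{\Fix(g_0)}) \in C \setminus S$. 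This is correct and has the advantage that the uniformity of the fiber size across all of $\CompG(g_0)$ is automatic from the homogeneous-space description, rather than requiring the extra step of ``conjugating these two components by any element of $\sH$''. (Note that the paper's element $-\Id$ would also serve as your witness in $C \setminus S$, since $(\Id - (-\Id))\lambda = 2\lambda \notin \Mov(g_0)$.)

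There is, however, a genuine gap in your cardinality argument. The inference ``$g_0$ is noncentral in $\On$, so $[g_0]_{\On}$ is infinite'' is false in general: a rotation $R_\theta \in \Otwo$ with $\theta \neq 0,\pi$ is noncentral, yet its conjugacy class is $\{R_\theta, R_{-\theta}\}$, of size~$2$. Such examples are excluded by the hypothesis $0 < m < n$ (for a nontrivial rotation in $\Otwo$ one has $m = n = 2$), but your argument does not use that hypothesis at this step --- you only use it to rule out $g_0 = \pm\Id$. The paper closes this gap by arguing via move-sets rather than centrality: since $\Mov(u g_0 u^{-1}) = u\,\Mov(g_0)$ (\Cref{lem:modConj}), the map $u g_0 u^{-1} \mapsto u\,\Mov(g_0)$ is well-defined, and $\On$ acts transitively on the set of $m$-dimensional linear subspaces of $\R^n$, which is infinite precisely when $0 < m < n$. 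Hence $[g_0]_{\On}$ surjects onto an infinite set. You should replace the noncentrality step with this (or an equivalent) argument that genuinely exploits $0 < m < n$.
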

\begin{proof} If $\Mov(g_0) = \R^n$ then we have $u(\lambda + \Mov(g_0)) = \R^n$ as well, for any $u \in \On$ and any $\lambda \in \R^n$, including $\lambda = 0$.  Hence $[g]_\aG = [g_0]_\aG$ has components as stated in (1). 

In case (2), we have that $\lambda + \Mov(g_0)$ is an $m$-dimensional affine subspace which is distinct from $\Mov(g_0)$.  Now $\On$ contains the isometry $-\Id$, while $-\lambda \neq \lambda$ since $\lambda \neq 0$.  Hence $\lambda + \Mov(g_0) \neq (-\Id)(\lambda + \Mov(g_0))$, and it follows that the components $\BaseG(g)$ and $(-\Id)\BaseG(g)(-\Id)^{-1}$ are distinct (even though the elements of these components have the same spherical part, since $-\Id$ is central in~$\On$).  Hence conjugating these two components by any element of $\sH$ also results in two distinct components.  As linearization sends both $\BaseG(g)$ and $(-\Id)\BaseG(g)(-\Id)^{-1}$ to $\BaseG(g_0)$, the map $\CompG(g) \to \CompG(g_0)$ induced by linearization has fibers of cardinality $\geq 2$.  

For the final claim in (2), by Lemma~\ref{lem:componentsInequality} it suffices to see that $g_0$ has $\aleph_1$-many distinct conjugates in $\On$.  Since $0 < m < n$, we have that $\On$ acts transitively on the set of $m$-dimensional subspaces of $\E^n$, which has cardinality $\aleph_1$.  On the other hand, $u \Mov(g_0) = \Mov(u g_0 u^{-1})$ by Lemma~\ref{lem:modConj}.  The result follows.
\end{proof}

\begin{rmk}\label{eg:ConjClassGcuspidal}  If $\Mov(g_0) = \R^n$ then by \Cref{prop:componentsG}(1), for any $\lambda \in \R^n$ we can view each component of $[t^\lambda g_0]_\aG = [g_0]_\aG$ as a ``sheet" of $\R^n$ with the corresponding $\On$-conjugate of $g_0$ sitting at each point, and the conjugation action of $\On$ permuting these sheets.  Alternatively, we can view the entire conjugacy class $[t^\lambda g_0]_\aG$ as a single copy of~$\R^n$ with the entire $\On$-conjugacy class of~$g_0$ sitting at each point. 
\end{rmk}

\begin{example}  We generalize the first paragraph of Example~\ref{eg:introG}. Let $r \in \On$ be a reflection, so that $\Fix(r)$ is a linear hyperplane, and write $p_r : \R^n \to \Fix(r)$ for the orthogonal projection onto this hyperplane. Then for all $\lambda \in \R^n$ not in the line through the origin $\Mov(r)$, the components of the  class $[t^\lambda r]_\aG$ are the lines tangent to the sphere in $\R^n$ of radius $\| p_r(\lambda) \| > 0$.  The map $\CompG(t^\lambda r) \twoheadrightarrow \CompG(r)$ from \Cref{prop:componentsG}(2) has fibers in bijection with the points of a sphere of dimension $n-2$, so this surjection is $2$-to-$1$ when $n = 2$ and $\aleph_1$-to-$1$ for all $n \geq 3$.
\end{example}


\section{Geometry of coconjugation sets}\label{sec:CoConj}

We now prove \Cref{thm:coconj}. Some key observations are gathered in the next result. 

\begin{prop}
	\label{prop:coconj}
	Let $h=t^{\lambda}h_0$ and $h'=t^{\lambda'}h_0'$ be elements of $\aH$, where $\lambda, \lambda' \in L_\aH$ and $h_0,h_0' \in \sH$.
	Then for all $u \in \sH$:
	\begin{enumerate}
	\item\label{i:1co} For any $\eta \in L_\aH$,
		\[
		k=t^\eta u\in \coconjH{h}{h'}  \Longleftrightarrow u\in \scoconjH{h_0}{h_0'} \text{ and } \lambda'- u\lambda = (\Id-h_0')\eta.
		\]
	\item\label{i:2co} There exists at least one $\eta \in L_\aH$ such that  $\lambda'- u\lambda = (\Id-h_0')\eta$ if and only if $\lambda' - u \lambda \in \Mod_\aH(h_0')$.
	\item\label{i:3co} Let $\eta_0 \in  L_\aH$ and suppose that $t^{\eta_0}u \in \coconjH{h}{h'}$. Then for all $\eta \in L_\aH$, we have 
		\[
		k = t^\eta u \in \coconjH{h}{h'} \Longleftrightarrow \eta\in \eta_0+(\Fix (h_0') \cap L_\aH).
		\] 	
	\end{enumerate}
\end{prop}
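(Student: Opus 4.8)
The plan is to reduce all three parts to a single semidirect-product computation of $khk^{-1}$, which is essentially the one already carried out in the proof of \Cref{thm:ConjClass}. Writing $k = t^\eta u$ with $\eta \in L_\aH$ and $u \in \sH$, that computation gives
\[
khk^{-1} = t^{u\lambda + (\Id - uh_0u^{-1})\eta}\, uh_0u^{-1}.
\]
Since every element of $\aH$ decomposes uniquely as a translation part times a spherical part, the requirement $khk^{-1} = h' = t^{\lambda'}h_0'$ separates into the spherical condition $uh_0u^{-1} = h_0'$ and the translation condition $u\lambda + (\Id - uh_0u^{-1})\eta = \lambda'$. The first is precisely $u \in \scoconjH{h_0}{h_0'}$; imposing it lets me replace $uh_0u^{-1}$ by $h_0'$ in the second, which then reads $(\Id - h_0')\eta = \lambda' - u\lambda$. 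This is exactly \eqref{i:1co}.

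For \eqref{i:2co} I would invoke the identity $\ModH(h_0') = (\Id - h_0')L_\aH$ recorded just after \Cref{defn:mod-set}. The equation $(\Id - h_0')\eta = \lambda' - u\lambda$ admits a solution $\eta \in L_\aH$ if and only if $\lambda' - u\lambda$ lies in the image $(\Id - h_0')L_\aH$, i.e. in $\ModH(h_0')$; this is immediate from the definition of the mod-set.

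Part \eqref{i:3co} is then a formal consequence of \eqref{i:1co}. Given $t^{\eta_0}u \in \coconjH{h}{h'}$, part \eqref{i:1co} supplies $u \in \scoconjH{h_0}{h_0'}$ and $(\Id - h_0')\eta_0 = \lambda' - u\lambda$. For any other $\eta \in L_\aH$, since the spherical condition is already met, \eqref{i:1co} says $t^\eta u \in \coconjH{h}{h'}$ iff $(\Id - h_0')\eta = \lambda' - u\lambda = (\Id - h_0')\eta_0$, that is iff $(\Id - h_0')(\eta - \eta_0) = 0$. As $\Fix(h_0') = \Ker(\Id - h_0')$ and $\eta - \eta_0 \in L_\aH$, this is equivalent to $\eta - \eta_0 \in \Fix(h_0') \cap L_\aH$, which is the asserted coset description.

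There is no genuine obstacle here; the proposition is really a repackaging of the conjugation formula from \Cref{thm:ConjClass} in the coconjugation setting. The only point needing care is the bookkeeping in \eqref{i:1co}: one must separate the translation and spherical parts of $khk^{-1}$ before simplifying, and in particular only rewrite $(\Id - uh_0u^{-1})\eta$ as $(\Id - h_0')\eta$ after the spherical condition $uh_0u^{-1} = h_0'$ has been imposed. Once \eqref{i:1co} is established, parts \eqref{i:2co} and \eqref{i:3co} follow by the elementary module-theoretic arguments above.
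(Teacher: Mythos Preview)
Your proof is correct and essentially identical to the paper's: both invoke the conjugation formula from the proof of \Cref{thm:ConjClass}, separate translation and spherical parts via the unique decomposition, appeal to $\ModH(h_0') = (\Id - h_0')L_\aH$ for \eqref{i:2co}, and subtract the two instances of the linear equation to land in $\Ker(\Id - h_0') = \Fix(h_0')$ for \eqref{i:3co}. Your remark about only rewriting $(\Id - uh_0u^{-1})\eta$ as $(\Id - h_0')\eta$ after imposing the spherical condition is the sole subtlety, and the paper handles it the same way.
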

\begin{proof} Let $\eta$ and $k$ be as in \eqref{i:1co}. By the same computation as in the proof of \Cref{thm:ConjClass}, we have 
$
khk^{-1} = t^{u \lambda + (\Id - u h_0 u^{-1})\eta} u h_0 u^{-1}.
$
Thus $k \in \coconjH{h}{h'}$ if and only if $u \in \coconj_{\sH}(h_0,h_0')$ and $\lambda' = u \lambda + (\Id - h_0')\eta$.  Part~\eqref{i:1co} follows.

Part~\eqref{i:2co} is immediate from the definition $\Mod_\aH(h_0') = (\Id - h_0')L_\aH$.

To prove item \eqref{i:3co} observe that by part \eqref{i:1co} we have 
\begin{equation}
	\label{eq:proof41}
	k = t^\eta u \in \coconjH{h}{h'}\;\Longleftrightarrow\;
	\lambda' - u \lambda = (\Id - h'_0)\eta.
\end{equation} 

By our assumption we have that $\lambda'-u\lambda=(\Id-h'_0)\eta_0$. We subtract this formula from \Cref{eq:proof41} and obtain
\[
0 = (\Id-u)\lambda - (\Id-u)\lambda = (\Id-h'_0)\eta - (\Id-h'_0)\eta_0,
\]
which is equivalent to the fact that $(\eta - \eta_{0}) \in \Ker(\Id - h'_0) = \Fix(h_0)$.  Since $\eta - \eta_{0} \in L_\aH$, we obtain item~\eqref{i:3co}.
\end{proof}	

Recall from Definition~\ref{def:transCompCoConj} that the translation-compatible part of the coconjugation set is given by:
\[
\coconj_{\sH}^{\lambda,\lambda'}(h_0,h_0') = \{ u \in \scoconjH{h_0}{h_0'} \mid \lambda' - u\lambda \in \Mod_\aH(h_0') \}.
\]
This definition is motivated by Proposition~\ref{prop:coconj}.

\begin{proof}[Proof of \Cref{thm:coconj}] 
	The equivalence~\eqref{eq:coconjNonempty} is immediate from \Cref{prop:coconj} and the definition of the set $\coconj_{\sH}^{\lambda,\lambda'}(h_0,h_0')$.  
	
	We may henceforth assume that $\coconj_{\sH}^{\lambda,\lambda'}(h_0,h_0') \neq \emptyset$. 
	Then, since $h_0$ and $h_0'$ are conjugate in $\sH$, we have $\Fix(h_0) = \{ 0 \}$ if and only if $\Fix(h_0') = \{ 0 \}$.   We obtain the given shape of $\coconjH{h}{h'}$ as a consequence of \Cref{prop:coconj}. Combining this with the fact that $\Fix(h_0')=\{0\}$ is equivalent to $(\Id-h_0')$ being invertible implies the statement about the case of the fixed point set being empty. 
\end{proof}

\section{Algorithmic solution to the (co)conjugacy problem}\label{sec:algorithm}

We now use the results of \Cref{sec:CoConj} to sketch an algorithm to solve the conjugacy problem and to compute all coconjugation sets. We restrict to $\aH$ split crystallographic, so that $\sH$ is finite and the lattice $L_\aH$ may be regarded a free $\Z$-module of rank $n$ (see Remark~\ref{rmk:cryst}).

\begin{algorithm}
	Let $h=t^{\lambda} h_0$ and $h'=t^{\lambda'} h_0'$ be elements of $\aH$, where $\lambda, \lambda' \in L_\aH$ and $h_0, h_0' \in \sH$.  We want to determine $\coconjH{h}{h'}$. 
	
	\begin{enumerate}
		\item Determine whether $h$ and $h'$ are conjugate:
		\begin{enumerate}
			\item\label{i:scoconj} If $\scoconjH{h_0}{h_0'}=\emptyset$, then $\coconjH{h}{h'}=\emptyset$. 
			\item\label{i:scoconjEq} If $\lambda' - k_0\lambda \notin \Mod_\aH(h_0')$ for all $k_0\in \scoconjH{h_0}{h_0'}$, then $\coconjH{h}{h'}=\emptyset$.  
		\end{enumerate}
		\item\label{i:coconj} If neither of the above two cases appeared, then $h$ and $h'$ are conjugate (by \Cref{prop:coconj}).  In this case $\coconjH{h}{h'}$ is obtained as in \Cref{thm:coconj}.  
	\end{enumerate}
\end{algorithm}

We have not implemented this algorithm, nor do we know its complexity, and we expect that addressing these questions would be a substantial endeavor. Part of the algorithm involves just the finite group $H_0$, and so could be done by brute force if necessary. However a serious implementation requires an efficient solution to the conjugacy problem in $H_0$, followed by computation of the spherical coconjugation set $\scoconjH{h_0}{h_0'}$ (if it is known to be nonempty).  Even for $H_0$ a finite Weyl group, carrying out these steps would likely involve similar efforts to those recently undertaken for finite groups of Lie type in the monograph~\cite{DFLOB}. We then need to know whether, for some $u\in \scoconjH{h_0}{h_0'}$, the $\Z$-linear equation $\lambda' - u \lambda = (\Id - h_0')\eta$ has a solution in $L_\aH$. That is, we need to determine whether certain $\Z$-linear equations have any integral solution. Finding the entire coconjugation set $\coconjH{h}{h'}$ then requires finding all integral solutions to these $\Z$-linear equations. There are various algorithms for such questions, which will require separate analysis to the work required for just $\sH$.

\renewcommand{\refname}{Bibliography}
\bibliography{bibliographyConj}
\bibliographystyle{alphaurl}

\typeout{get arXiv to do 4 passes: Label(s) may have changed. Rerun} 

\end{document}